\documentclass[11pt,letterpaper,reqno]{amsart}
\usepackage{tikz}
\usetikzlibrary{positioning, shapes.geometric, arrows.meta}
\usepackage{amssymb}
\usepackage{amsmath}
\usepackage{amsthm}
\usepackage{amsfonts}
\usepackage{enumitem}
\usepackage{pgfplots}
\pgfplotsset{compat=1.18}
\usepackage{booktabs}

\usepackage{graphicx}
\usepackage[T1]{fontenc}
\usepackage{doi}
\usepackage{float}
\usepackage[dvipsnames]{xcolor}
\usepackage{hyperref}
\usepackage{bookmark}

\hypersetup{
	colorlinks=true,
	linkcolor=RoyalBlue,
	citecolor=ForestGreen!65!white,
	urlcolor=BrickRed,
	pdftitle={Rigidity of uniform Roe algebras over arbitrary uniformly locally finite coarse spaces},
	pdfauthor={Teng Zhang},
	pdfsubject={Rigidity of uniform Roe algebras and bijective coarse equivalence},
	pdfkeywords={uniform Roe algebra, coarse space, bijective coarse equivalence, rigidity},
	pdfstartview={FitH}
}

\newtheorem{thm}{Theorem}[section]
\newtheorem{lem}[thm]{Lemma}
\newtheorem{prop}[thm]{Proposition}
\newtheorem{cor}[thm]{Corollary}

\newtheorem{prob}[thm]{Problem}

\theoremstyle{definition}

\newtheorem{rem}[thm]{Remark}
\newtheorem{defin}[thm]{Definition}
\numberwithin{equation}{section}

\newcommand{\cE}{\mathcal E}
\newcommand{\cF}{\mathcal F}

\newcommand{\Cu}{C_u^*}
\newcommand{\supp}{\operatorname{supp}}
\newcommand{\Ad}{\operatorname{Ad}}
\newcommand{\id}{\operatorname{id}}

\newcommand{\diag}{\operatorname{diag}}

\begin{document}

\title[Isomorphism rigidity of uniform Roe algebras]
{Isomorphism rigidity of uniform Roe algebras over arbitrary uniformly locally finite coarse spaces}

%

\author[T.~Zhang]{Teng Zhang}

\address{School of Mathematics and Statistics, Xi'an Jiaotong University, Xi'an 710049, P. R. China}
\email{teng.zhang@stu.xjtu.edu.cn}

\subjclass[2020]{Primary 46L05; Secondary 46L85, 51F30}

\keywords{uniform Roe algebra; coarse space; bijective coarse equivalence; rigidity; finite paving; random diagonal unitary}

\thanks{Teng Zhang is supported by the China Scholarship Council, the Young Elite Scientists Sponsorship Program for PhD Students (China Association for Science and Technology), and the Fundamental Research Funds for the Central Universities at Xi'an Jiaotong University (Grant No.~xzy022024045).}

\begin{abstract}
	Let $(X,\cE)$ and $(Y,\cF)$ be uniformly locally finite coarse spaces. We prove that every $C^*$-algebra isomorphism
$
	\Cu(X,\cE)\cong \Cu(Y,\cF)
$
	forces $(X,\cE)$ and $(Y,\cF)$ to be bijectively coarsely equivalent. This completely resolves the isomorphism rigidity problem for uniform Roe algebras over arbitrary uniformly locally finite coarse spaces.
\end{abstract}

\maketitle

\section{Introduction}
 
 Uniform Roe algebras provide a natural operator algebraic model for
 large-scale geometry. They arose from the operator algebraic study of
 large-scale geometry initiated by Roe's work on index theory for
 noncompact manifolds. This circle of ideas is closely related to the
 coarse Baum--Connes conjecture; see Roe's monograph
 \cite[Chapters~2 and~4]{Roe03} and the groupoid formulation of
 Skandalis, Tu, and Yu \cite{STY02}. For a discrete coarse space, the
 uniform Roe algebra is the norm closure of the operators whose matrix
 supports are controlled by the coarse structure.
 The construction is functorial in the expected direction. A bijective
 coarse equivalence between two uniformly locally finite coarse spaces
 induces an isomorphism of their uniform Roe algebras, while a coarse
 equivalence induces a stable isomorphism. The rigidity problem asks to what extent these implications can be
 reversed. At the level of isomorphism, the natural question is whether
 the abstract uniform Roe algebra, without its distinguished canonical
 diagonal, determines the underlying coarse space up to bijective coarse
 equivalence. This leads to the following fundamental problem, for example, see \cite[Section~4]{BF21}, \cite[Problem~1.1]{BFV22} or \cite[Problem~4.8]{Vig26}.
 
 \begin{prob}[Isomorphism rigidity problem]\label{ques:bijective-rigidity}
 	Let $(X,\cE)$ and $(Y,\cF)$ be uniformly locally finite coarse spaces. If
$
 	\Cu(X,\cE)\cong \Cu(Y,\cF)
 $
 	as $C^*$-algebras, must $(X,\cE)$ and $(Y,\cF)$ be bijectively coarsely equivalent?
 \end{prob}

The first general rigidity results were obtained under property~A. For uniformly locally finite metric spaces, \v{S}pakula and Willett \cite[Theorem~4.1]{SW13} proved that isomorphisms of Roe-type algebras determine the coarse equivalence class of the underlying spaces. Roe and Willett \cite[Theorem~1.3]{RW14} later characterized property~A by the compactness of all ghost operators in the uniform Roe algebra, thereby identifying the main obstruction that subsequent rigidity arguments had to address. Sako  \cite[Theorem~4.1]{Sak14} further clarified the relevant localization principle by proving that property~A is equivalent to the operator norm localization property. White and Willett  \cite[Theorem~E and Corollary~6.13]{WW20} then developed an approach based on Cartan subalgebras and established strong uniqueness and rigidity results for uniform Roe algebras in the presence of property~A.

The passage from metric spaces to arbitrary coarse structures introduces several genuine difficulties. The coarse structure may fail to be countably generated, the associated Hilbert space may be nonseparable, and the space may have arbitrarily many coarse components. Braga and Farah~\cite{BF21} initiated the systematic study of rigidity for uniform Roe algebras in this general setting. 
Braga, Farah, and Vignati subsequently proved that if one of the spaces has property~A, then the spaces are bijectively coarsely equivalent \cite[Theorem~1.3]{BFV22}. They \cite[Theorem~1.4]{BFV22} also showed that if both uniform Roe algebras contain only compact ghost projections, then the spaces are coarsely equivalent.
In later work \cite{BFV24}, weaker forms of operator norm localization were developed for equi-approximable families of projections, leading to further rigidity results for nonmetrizable coarse spaces. These developments illustrate both the strength and the limitations of methods based on property~A, operator norm localization, and ghost operators.

For metric spaces, the rigidity problem without additional geometric assumptions was settled by Baudier, Braga, Farah, Khukhro, Vignati, and Willett \cite[Theorem~1.2]{BBFKVW22}. They \cite[Theorem~1.4]{BBFKVW22} also proved that, for uniformly locally finite metric spaces, coarse equivalence is equivalent to Morita equivalence of the corresponding uniform Roe algebras. Their argument obtains a uniform lower bound on suitable matrix coefficients by combining equi-approximability with a quantitative vector measure argument based ultimately on the Shapley--Folkman theorem. Mart\'inez and Vigolo \cite[Theorem~A]{MV25} later established unconditional rigidity for Roe algebras. Vignati \cite[Theorem~A]{Vig26} subsequently strengthened the metric uniform Roe algebra rigidity theorem by replacing coarse equivalence with bijective coarse equivalence. 
Krutoy \cite[Lemma~A and Theorem~B]{Kru26} subsequently related
bijective rigidity for admissible classes of metric spaces to the
injectivity of the zeroth comparison map for coarse groupoids and, using
Vignati's theorem, proved the unconditional injectivity of this map.

A substantial part of the countably generated case is also covered by the framework of coarse geometric modules developed by Mart\'inez and Vigolo \cite{MVframework,MVmodules}. In particular, their rigidity theorem shows that stable isomorphism of uniform Roe algebras implies coarse equivalence for coarsely locally finite, countably generated coarse spaces of the same coarse cardinality; see \cite[Corollary~8.3.5]{MVframework}. The general case in which the coarse structure is not assumed to be countably generated falls outside the scope of the countable diagonalization and Baire category arguments used in the metric theory.

The purpose of this paper is to resolve the isomorphism rigidity problem stated in Problem~\ref{ques:bijective-rigidity}.

\begin{thm}\label{thm:main}
    Let $(X,\cE)$ and $(Y,\cF)$ be uniformly locally finite coarse spaces. If
$
        \Cu(X,\cE)\cong \Cu(Y,\cF)
$
    as $C^*$-algebras, then $(X,\cE)$ and $(Y,\cF)$ are bijectively coarsely equivalent.
\end{thm}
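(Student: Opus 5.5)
Let $\Phi\colon \Cu(X,\cE)\to\Cu(Y,\cF)$ be an isomorphism. I follow the standard scheme used in the metric case. First I show that $\Phi$ is spatially implemented, $\Phi=\Ad U$ for a unitary $U\colon\ell_2(X)\to\ell_2(Y)$. Next I prove that $U$ and $U^*$ are \emph{coarse-like}: for each entourage $E\in\cE$ and $\varepsilon>0$ there is $F\in\cF$ such that $\Phi(a)$ is $\varepsilon$-$F$-approximable for every contraction $a$ supported in $E$, and symmetrically for $\Phi^{-1}$. Then I establish a uniform lower bound $\delta>0$ for matrix coefficients. Finally I build a bijective coarse equivalence from the resulting relation by Hall's marriage theorem.

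Spatial implementation holds in general, since $\Cu$ contains the compacts as an essential ideal and any isomorphism restricts to an isomorphism of the compacts. Coarse-likeness in the non-metrizable setting I would obtain as Braga--Farah do. Suppose it fails for some $E,\varepsilon$. Then for every $F\in\cF$ (a directed set, not a sequence) there is a contraction $a_F$ supported in $E$ witnessing the failure. Using uniform local finiteness, I split $E$ into finitely many partial translations $E=\bigsqcup_{i\le N}\operatorname{graph}(t_i)$. Then every contraction supported in $E$ is a sum of $N$ operators of the form $v_{t_i}\cdot$diagonal. So it suffices to treat diagonal contractions, i.e.\ to show that $\{\Phi(\chi_A)\}_{A\subseteq X}$ and $\{\Phi(v_{t})\}$ are equi-approximable. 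For diagonal projections, the usual argument uses Baire category over $\{0,1\}^{\mathbb N}$ on a countable disjoint family, which is only countable. Instead I would use \emph{random diagonal unitaries} $u_\omega=\sum_x \omega_x e_{xx}$ with i.i.d.\ signs. If $\Phi(\chi_{A})$ is badly approximable for many $A$, I choose finite pieces where the failure is witnessed. I then pave $Y$ into finite blocks (the "finite paving"), so that the $\Phi(\chi_{A_k})$ for disjoint finite $A_k$ fail at disjoint $F_k$-scales. A random sign combination $\sum_k\omega_k\chi_{A_k}$ lies in $\ell_\infty(X)$, but with positive probability its image is not $\varepsilon/2$-approximable at any fixed $F$. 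This uses noncommutative Khintchine/averaging $\mathbb E_\omega[\Phi(u_\omega)\,\cdot\,\Phi(u_\omega)^*]$ to recover each block, and it contradicts the fact that $\Phi(u_\omega)\in\Cu(Y)$ is itself approximable. This step only needs countably many finite witnesses extracted from one fixed failure, so countable generation is not required.

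With coarse-likeness in hand, I show there is $\delta>0$ such that for each $x$ some $y$ has $|\langle Ue_x,e_y\rangle|\ge\delta$, and symmetrically. Equi-approximability makes the vectors $Ue_x$ uniformly concentrated on $F$-balls of bounded cardinality $\le M$. Then $\sup_y|\langle Ue_x,e_y\rangle|^2\ge (1-\varepsilon)/M$, and no Shapley--Folkman argument is needed at the isomorphism level. I define the relation $R=\{(x,y):|\langle Ue_x,e_y\rangle|\ge\delta\}$. Coarse-likeness shows that $R$ maps entourages to entourages in both directions. The hard part is bijectivity: Hall's condition $|R(A)|\ge|A|$ for finite $A\subseteq X$, and its converse. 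I would derive this from a trace/rank comparison. Choose $\delta$ small enough that $\chi_{R(A)}U\chi_A$ is close to $U\chi_A$ in norm, with error below $1$; this uses concentration together with a finite paving so that the estimate is uniform in $A$. Then $\chi_{R(A)}U\chi_A$ is injective on $\ell_2(A)$, giving $|R(A)|\ge|A|$. Hall's theorem for locally finite bipartite graphs, which in the infinite case follows by compactness, combined with Cantor--Schröder--Bernstein for matchings inside the bounded-degree relation, yields a bijection $f\subseteq R$. This $f$ is a bijective coarse equivalence. I expect the main obstacle to be the coarse-likeness step in the uncountably generated case. In particular, making the random-unitary argument produce a contradiction using only one fixed entourage at a time is the delicate point.
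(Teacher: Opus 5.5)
\textbf{There are genuine gaps: the coarse-likeness step is not established, and the derivation of Hall's condition fails even if coarse-likeness is granted.}

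\textbf{Coarse-likeness.} This is the load-bearing step of your plan. Your sketch does not prove it, and it is exactly the step that breaks when $\cF$ is not countably generated. You flag it yourself as the delicate point, but the claim that ``countably many finite witnesses suffice'' is precisely what fails.

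A failure at $(E,\varepsilon)$ gives one witness $a_F$ for each $F\in\cF$. Suppose you combine countably many of them into $a=\sum_k\omega_k\chi_{A_k}$. At best you can show that $\Phi(a)$ is not approximable at the scales $F_1,F_2,\dots$ you used. But $\Phi(a)\in\Cu(Y,\cF)$ is approximable at some scale of its own. When $\cF$ has no countable cofinal subfamily, that scale need not lie inside any $F_k$, so there is no contradiction. The Baire-category arguments behind coarse-likeness rely on such a cofinal sequence. Moreover, the images $\Phi(\chi_{A_k})$ can be spread out like ghost projections, so ``paving $Y$ so that they fail at disjoint scales'' has no evident meaning.

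The paper never proves coarse-likeness. Its key idea is a \emph{scale-free} obstruction (Lemma~\ref{lem:random-diagonal}). Suppose the rows of $U$ were diffuse along $y_n$ with $\sum_n m_n<\infty$. A fourth-moment estimate then shows that for almost every single realization $T=UD_\varepsilon U^*$, one has $L_N(T\delta_{y_n})\to0$ for \emph{every} $N$. But whatever entourage $F$ approximates that one $T$ forces $L_{N_{F^{-1}}}(T\delta_y)>15/16$ for all $y$, a contradiction. Controlledness and closeness are likewise obtained one operator at a time, via Lemma~\ref{lem:threshold}. The double $\ell^1$ paving of Lemma~\ref{lem:double-paving} prevents cancellation in $UV_kU^*$ and $U^*P_kU$. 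No uniformity over a family of operators is ever needed.

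\textbf{Hall's condition.} No uniform $\delta$ gives $\|\chi_{Y\setminus R(A)}U\chi_A\|<1$ for all finite $A$. Here is a counterexample.
\begin{itemize}
\item Take an expander $X=\bigsqcup_nX_n$ with $|X_n|$ even.
\item Let $p=\bigoplus_np_n\in\Cu(X)$ be the ghost projection onto the normalized constant vectors $\xi_n$.
\item Set $U=1-2p$, so $\Phi=\Ad U$ is inner and hence coarse-like.
\end{itemize}
Once $|X_n|>2/\delta$, your relation $R$ restricted to $X_n$ is the diagonal. Take $A\subseteq X_n$ with $|A|=|X_n|/2$ and $\eta=\chi_A\xi_n$. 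Then $\|\eta\|^2=1/2$, so $\chi_AU\chi_A\eta=\eta-2\|\eta\|^2\eta=0$. Thus $\chi_{R(A)}U\chi_A$ is not injective, even though every column of $U$ is extremely concentrated. Per-column concentration only controls Hilbert--Schmidt-type quantities, which grow with $|A|$.

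The paper gets bijectivity differently. It uses Kadison--Singer paving (Lemma~\ref{lem:operator-paving}) to find a partition $X=B_1\sqcup\cdots\sqcup B_m$ with each $p_{f(B_i)}Up_{B_i}$ invertible. The polar parts of these compressions give a Murray--von Neumann equivalence $V\in M_{m,1}(\Cu(Y,\cF))$. Hall's theorem is then applied to the support of an entourage-supported \emph{invertible} approximant of $V$. It is not applied to a threshold relation of $U$, which is not itself approximable.

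\textbf{A smaller point.} $K(\ell^2(X))\not\subseteq\Cu(X,\cE)$ when $X$ has more than one coarse component. Spatial implementation must therefore go through the socle $\bigoplus^{c_0}_{C}K(\ell^2(C))$, as in Propositions~\ref{prop:socle} and~\ref{prop:spatial}.
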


We now describe the proof. First, the closed socle of a uniform Roe algebra is identified intrinsically as the $c_0$-direct sum of the compact-operator algebras associated with the coarse components. It follows that every isomorphism is implemented by a unitary
\[
    U:\ell^2(X)\longrightarrow \ell^2(Y)
\]
which permutes the coarse-component summands. Writing
\[
    u_{yx}=\langle U\delta_x,\delta_y\rangle,
\]
the first main step is the two-sided coefficient estimate
\begin{equation}\label{eq:intro-coefficients}
    \inf_{y\in Y}\sup_{x\in X}|u_{yx}|>0
    \quad\text{and}\quad
    \inf_{x\in X}\sup_{y\in Y}|u_{yx}|>0.
\end{equation}
The proof does not use property~A, ghost compactness, or a countable exhaustion of the coarse structure. Instead, it randomizes a diagonal unitary by independent Steinhaus variables. A fourth-moment estimate shows that, if a sequence of rows of $U$ were uniformly diffuse, then one random conjugate of a diagonal unitary would have columns whose mass cannot be captured by any fixed finite number of coordinates. This contradicts norm approximation by an entourage-supported operator.

The estimates in \eqref{eq:intro-coefficients} provide uniformly finite-to-one maps $f:X\to Y$ and $g:Y\to X$ with uniformly large selected coefficients. A direct conjugation of a partial translation, however, contains cross terms that may cancel the selected main coefficient. The second main step is therefore a finite double $\ell^1$-paving lemma. It partitions each relevant matching into finitely many classes on which the total cross term is smaller than half of the main term. A fixed positive coefficient threshold then detects every class as an entourage. This proves that $f$ and $g$ are controlled. Applying the same paving argument to diagonal projections shows that $g\circ f$ and $f\circ g$ are close to the corresponding identity maps. To obtain a bijection, we finitely partition $X$ so that the compressions of $U$ along $f$ are invertible. Their polar parts give an explicit Murray--von Neumann equivalence in a finite matrix amplification of $\Cu(Y,\cF)$. Approximating this equivalence by an entourage-supported invertible operator produces a locally finite bipartite graph satisfying Hall's condition on both sides, and hence a bijection close to $f$.

The paper is organized as follows. Section~\ref{sec:preliminaries} fixes notation and records elementary support and coloring facts. Section~\ref{sec:spatial} proves spatial implementation in the presence of arbitrary coarse components. Section~\ref{sec:random} establishes the random diagonal-unitary lemma and derives \eqref{eq:intro-coefficients}. Section~\ref{sec:paving} proves the finite double paving result. Section~\ref{sec:proof} constructs controlled coarse inverses, and Section~\ref{sec:bijective} upgrades them to a bijective coarse equivalence.

\noindent
\section{Coarse spaces and uniform Roe algebras}\label{sec:preliminaries}

We begin with the definitions needed throughout the proof. Our conventions agree with those used for general coarse structures in \cite[Section~2, pp.~305--313]{BFV22}.

\begin{defin}
	Let $X$ be a set. A \emph{coarse structure} on $X$ is a family $\cE$ of subsets of $X\times X$ containing
	\[
		\Delta_X=\{(x,x):x\in X\}
	\]
	and closed under taking subsets, finite unions, inverses, and compositions. The elements of $\cE$ are called \emph{entourages}.

	For $E\subseteq X\times X$ and $x\in X$, set
	\[
		E[x]=\{y\in X:(x,y)\in E\}.
	\]
	The coarse space $(X,\cE)$ is \emph{uniformly locally finite} if
	\begin{equation}\label{eq:ulf}
		N_E:=\sup_{x\in X}|E[x]|<\infty
	\end{equation}
	for every $E\in\cE$.
\end{defin}

Since $E^{-1}$ is also an entourage, uniform local finiteness bounds both the horizontal and vertical sections of every entourage.

Two maps $h_1,h_2:X\to Y$ are \emph{close}, written $h_1\sim h_2$, if
\[
    \{(h_1(x),h_2(x)):x\in X\}\in\cF.
\]
Closeness is an equivalence relation. Moreover, if $h_1\sim h_2$ and
$k:Y\to Z$ is controlled, then
$
k\circ h_1\sim k\circ h_2.
$

A map $f:X\to Y$ is \emph{controlled} if $(f\times f)(E)\in\cF$ for all $E\in\cE$. The spaces $(X,\cE)$ and $(Y,\cF)$ are \emph{coarsely equivalent} if there are controlled maps $f:X\to Y$ and $g:Y\to X$ such that $g\circ f\sim\id_X$ and $f\circ g\sim\id_Y$. They are \emph{bijectively coarsely equivalent} if there is a bijection $b:X\to Y$ such that both $b$ and $b^{-1}$ are controlled. These conditions also imply that $f$ and $g$ are proper. Indeed, if $B\subseteq Y$ is bounded, so that $B\times B\in\cF$, then $g(B)\times g(B)\in\cE$. Composing this entourage with the entourage witnessing $g\circ f\sim\id_X$ and its inverse shows that $f^{-1}(B)\times f^{-1}(B)\in\cE$. The argument for $g$ is symmetric. Thus this agrees with the usual notion of coarse equivalence.

For $x,x'\in X$, let $e_{xx'}\in B(\ell^2(X))$ denote the matrix unit determined by
\[
	e_{xx'}\delta_z=
	\begin{cases}
		\delta_x,&z=x',\\
		0,&z\ne x'.
	\end{cases}
\]
For $T\in B(\ell^2(X))$, its support is
\[
	\supp(T)=\{(x,x'):\langle T\delta_{x'},\delta_x\rangle\ne0\}.
\]
The uniform Roe algebra is
\[
	\Cu(X,\cE)
	=
	\overline{\{T\in B(\ell^2(X)):\supp(T)\in\cE\}}^{\|\cdot\|}.
\]
In particular,
\begin{equation}\label{eq:diagonal-inclusion}
	\ell^\infty(X)\subseteq \Cu(X,\cE),
\end{equation}
because every diagonal operator is supported on $\Delta_X$.

The following elementary observation will be used repeatedly to convert a uniform lower bound on matrix entries into an entourage.

\begin{lem}\label{lem:threshold}
	Let $(X,\cE)$ be a uniformly locally finite coarse space, let $T\in\Cu(X,\cE)$, and let $c>0$. Then
	\[
		\supp_c(T)
		:=
		\{(x,x')\in X\times X:
		|\langle T\delta_{x'},\delta_x\rangle|\ge c\}
	\]
	is an entourage.
\end{lem}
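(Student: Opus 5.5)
The plan is to approximate $T$ by an operator with controlled support and to show that every large matrix entry of $T$ must lie in that support. Since $T\in\Cu(X,\cE)$, there is an operator $S\in B(\ell^2(X))$ with $\supp(S)\in\cE$ and $\|T-S\|<c$. We fix such an $S$ and set $E:=\supp(S)\in\cE$.

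The key step is the elementary bound on matrix coefficients by the operator norm. For all $x,x'\in X$,
\[
	|\langle (T-S)\delta_{x'},\delta_x\rangle|\le\|T-S\|<c.
\]
Suppose $(x,x')\in\supp_c(T)$, so that $|\langle T\delta_{x'},\delta_x\rangle|\ge c$. The triangle inequality then gives
\[
	|\langle S\delta_{x'},\delta_x\rangle|\ge |\langle T\delta_{x'},\delta_x\rangle|-|\langle (T-S)\delta_{x'},\delta_x\rangle|>c-c=0.
\]
Hence $\langle S\delta_{x'},\delta_x\rangle\ne0$, that is, $(x,x')\in\supp(S)=E$. This proves the inclusion $\supp_c(T)\subseteq E$.

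Since a coarse structure is closed under taking subsets, $\supp_c(T)\in\cE$, which is the claim. Uniform local finiteness is not actually needed for this argument; it only matters later, when these entourages are used to bound sections.

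There is no real obstacle here. The only point requiring care is to choose the approximation strictly within $c$, so that the inequality $|\langle S\delta_{x'},\delta_x\rangle|>0$ is strict. This is possible by the definition of $\Cu(X,\cE)$ as the norm closure of the operators with support in $\cE$.
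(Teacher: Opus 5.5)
Your proposal is correct and follows the paper's proof: approximate $T$ within $c$ by an entourage-supported operator and use $|\langle (T-S)\delta_{x'},\delta_x\rangle|\le\|T-S\|<c$ to get $\supp_c(T)\subseteq\supp(S)$. The paper argues the contrapositive (entries of $T$ outside the entourage have modulus $<c$), which is the same argument. Your remark that uniform local finiteness is not needed is also correct.
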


\begin{proof}
	Choose an entourage $E\in\cE$ and an $E$-supported operator $S$ such that
	\[
		\|T-S\|<c.
	\]
	If $(x,x')\notin E$, then
	\[
		|\langle T\delta_{x'},\delta_x\rangle|
		=
		|\langle (T-S)\delta_{x'},\delta_x\rangle|
		<c.
	\]
	Consequently, $\supp_c(T)\subseteq E$, and closure of $\cE$ under subsets completes the proof.
\end{proof}

We next record a decomposition of bounded-degree relations. A relation $R\subseteq X\times Y$ is called a \emph{matching} if its first-coordinate projection and its second-coordinate projection are both injective on $R$.

\begin{lem}\label{lem:matching-decomposition}
	Let $R\subseteq X\times Y$ satisfy
	\[
		\sup_{x\in X}|\{y:(x,y)\in R\}|\le d
	\]
	and
	\[
		\sup_{y\in Y}|\{x:(x,y)\in R\}|\le d
	\]
	for some $d\in\mathbb N$. Then $R$ is the union of at most $d$ matchings.
\end{lem}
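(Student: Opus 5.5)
The plan is to view $R$ as the edge set of a bipartite graph $G$ with vertex classes $X$ and $Y$, where $x\in X$ is joined to $y\in Y$ exactly when $(x,y)\in R$. The hypotheses say that every vertex of $G$ has degree at most $d$. A matching is then precisely a set of edges no two of which share an endpoint. So the statement reduces to König's edge-colouring theorem: a bipartite graph of maximum degree at most $d$ has a proper edge colouring with $d$ colours. The colour classes are then the required matchings $R_1,\dots,R_d$, some of which may be empty, and $R=\bigcup_{i} R_i$.

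The first step is the finite case. When $R$ is finite, I would prove the colouring statement by the usual alternating-path argument, inducting on the number of edges. Remove one edge $(x,y)$ and colour the rest with $d$ colours. In the remaining graph both $x$ and $y$ have degree at most $d-1$, so some colour $\alpha$ is missing at $x$ and some colour $\beta$ is missing at $y$. If $\alpha=\beta$, give $(x,y)$ that colour. Otherwise, consider the maximal path starting at $y$ whose edges alternate between colours $\alpha$ and $\beta$. Bipartiteness implies this path cannot end at $x$: a path from $y$ to $x$ has odd length, so it would both begin and end with an $\alpha$-edge, contradicting that $\alpha$ is missing at $x$. Swapping $\alpha$ and $\beta$ along this path keeps the colouring proper and frees $\alpha$ at $y$, so $(x,y)$ can receive colour $\alpha$.

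The second step passes to arbitrary, possibly uncountable, $R$ by a compactness argument. Consider the product space $\{1,\dots,d\}^R$, which is compact by Tychonoff's theorem. For each pair of distinct edges $r,r'\in R$ sharing an endpoint, the set of colourings assigning $r$ and $r'$ different colours is clopen. Any finite family of these constraints involves only finitely many edges, and by the finite case it is satisfied by some colouring of that finite subgraph, extended arbitrarily to the rest of $R$. So the family has the finite intersection property, and some $c\in\{1,\dots,d\}^R$ satisfies every constraint. Setting $R_i=c^{-1}(i)$ gives the decomposition.

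Alternatively, one can avoid the compactness step by noting that bounded degree makes every connected component of $G$ countable. One then colours each component separately, using König's lemma or a diagonal argument over an enumeration of its edges, and takes the union over components using the axiom of choice. The main obstacle is the finite alternating-path argument, specifically ruling out that the alternating path returns to $x$. That is exactly where bipartiteness is used; the passage to infinite $R$ is routine.
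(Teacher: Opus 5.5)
Your proposal is correct and follows the paper's proof. Both view $R$ as a bipartite graph of maximum degree at most $d$, apply K\H{o}nig's edge-colouring theorem to finite subgraphs, and pass to all of $R$ using the finite intersection property in the compact space $\{1,\ldots,d\}^R$. The only difference is that you prove the finite K\H{o}nig theorem yourself by the alternating-path argument, where the parity argument correctly keeps $x$ off the $\alpha\beta$-path, whereas the paper simply cites it.
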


\begin{proof}
	Regard $R$ as the edge set of the bipartite graph with vertex classes $X$ and $Y$. Every finite subgraph has maximum degree at most $d$, so
	K\H{o}nig's line-colouring theorem
	\cite[p.~453, Theorem~17.2]{BM08}
	gives a proper edge colouring with $d$ colors. Apply the finite intersection property in the compact product space $\{1,\ldots,d\}^{R}$ to obtain a proper $d$-edge-coloring of the entire graph. Each color class is a matching.
\end{proof}

Thus every entourage in a uniformly locally finite coarse space is a finite union of graphs of partial bijections. We shall use Lemma~\ref{lem:matching-decomposition} not only for entourages, but also for their images under uniformly finite-to-one maps.

\section{Coarse components and spatial implementation}\label{sec:spatial}

An arbitrary coarse space need not be coarsely connected, and $\ell^2(X)$ need not be separable. The first task is therefore to isolate the intrinsic ideal which forces an algebra isomorphism to be spatial.

Define an equivalence relation on $X$ by
\begin{equation}\label{eq:component-relation}
	x\sim_{\cE}x'
	\quad\Longleftrightarrow\quad
	(x,x')\in E\ \text{for some }E\in\cE.
\end{equation}
Closure of $\cE$ under inverses and compositions shows that \eqref{eq:component-relation} is indeed an equivalence relation. The set of equivalence classes is denoted by $\pi_0(X,\cE)$, and its elements are called the \emph{coarse components}.

Every entourage is contained in the union of the squares of the coarse components. It follows that
\begin{equation}\label{eq:block-decomposition}
	\ell^2(X)=\bigoplus_{C\in\pi_0(X,\cE)}\ell^2(C)
\end{equation}
reduces every operator in $\Cu(X,\cE)$.

\begin{prop}\label{prop:socle}
	Let $(X,\cE)$ be a uniformly locally finite coarse space. The minimal projections of $\Cu(X,\cE)$ are precisely the Hilbert-space rank-one projections supported on one coarse component. Consequently, the closed linear span of the minimal projections is
	\begin{equation}\label{eq:socle}
		J_X
		=
		\bigoplus_{C\in\pi_0(X,\cE)}^{c_0}
		K(\ell^2(C)).
	\end{equation}
	Moreover, $J_X$ is an essential ideal of $\Cu(X,\cE)$.
\end{prop}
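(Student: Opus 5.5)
We prove the three assertions in turn: that rank-one projections onto vectors in a single $\ell^2(C)$ belong to $\Cu(X,\cE)$ and are minimal, that every minimal projection has this form, and that the span is the $c_0$-sum and is essential.

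\emph{Membership and minimality.} Fix a coarse component $C$ and $x,x'\in C$. By \eqref{eq:component-relation} there is $E\in\cE$ with $(x,x')\in E$. Hence $\{(x,x')\}\in\cE$ by closure under subsets, and $e_{xx'}\in\Cu(X,\cE)$. Finite linear combinations of such matrix units are norm dense in $K(\ell^2(C))$, so $K(\ell^2(C))\subseteq\Cu(X,\cE)$. In particular every rank-one projection $p=\xi\otimes\overline{\xi}$ with $\xi\in\ell^2(C)$ lies in the algebra. It is minimal because $p\,\Cu(X,\cE)\,p\subseteq pB(\ell^2(X))p=\mathbb C p$.

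\emph{Every minimal projection has this form.} This is the main step. Let $q$ be a minimal projection of $\Cu(X,\cE)$. By \eqref{eq:block-decomposition} we can write $q=\sum_C q_C$ with $q_C$ a projection on $\ell^2(C)$. Each block projection $1_C\in\ell^\infty(X)\subseteq\Cu(X,\cE)$ commutes with $q$, so $q1_C$ is a subprojection of $q$ in the algebra. Minimality then forces $q=q1_C$ for a single component $C$. Next pick $x\in C$ with $\langle q\delta_x,\delta_x\rangle\neq0$ and put $\xi=q\delta_x\neq0$. Then $q e_{xx} q = \xi\otimes\overline{\xi}$ is a nonzero element of $q\Cu(X,\cE)q$. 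Since $q$ is minimal, $q\Cu(X,\cE)q=\mathbb C q$, so $q$ is a scalar multiple of a rank-one operator. Hence $q$ is the rank-one projection onto $\mathbb C\xi\subseteq\ell^2(C)$. This local argument avoids any appeal to ghost operators or separability.

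\emph{The socle and essentiality.} The closed span of the minimal projections is therefore the closed span of $\bigcup_C K(\ell^2(C))$. Because these summands are mutually orthogonal, this span is exactly the $c_0$-direct sum \eqref{eq:socle}: finite sums are dense, and norms of orthogonal block sums are suprema. $J_X$ is an ideal because every $T\in\Cu(X,\cE)$ is reduced by each $\ell^2(C)$, so $T$ multiplies $K(\ell^2(C))$ into itself on either side. For essentiality, let $T\in\Cu(X,\cE)$ with $TJ_X=0$. Then $Te_{xx}=0$ for every $x$ (each $x$ lies in some component), so $T\delta_x=0$ for all $x$, and therefore $T=0$. Since $J_X$ is self-adjoint, $J_XT=0$ similarly forces $T=0$, so $J_X$ has trivial annihilator and is essential.
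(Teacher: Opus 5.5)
Your proof is correct and follows essentially the paper's route. You show $K(\ell^2(C))\subseteq\Cu(X,\cE)$ via matrix units, localize a minimal projection to one component using $\chi_C\in\ell^\infty(X)$, force rank one with a rank-one element of the corner, and get essentiality from the $e_{xx}$. One small point: the paper reads ``minimal'' in the order sense, and then the implication ``$q$ minimal $\Rightarrow q\,\Cu(X,\cE)\,q=\mathbb{C}q$'' is not automatic in a general $C^*$-algebra, but you can bypass it because $\|\xi\|^{-2}qe_{xx}q$ is a rank-one projection in the algebra lying under $q$, hence equal to $q$, which is exactly the paper's argument (and your ``therefore'' for the closed span should cite polarization, as the paper does).
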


\begin{proof}
	Fix a coarse component $C$. If $F\subseteq C$ is finite, then $F\times F$ is an entourage: for each pair in $F\times F$, choose an entourage containing it and take the finite union. Hence every operator whose matrix is supported on a finite subset of $C\times C$ belongs to $\Cu(X,\cE)$. Since such operators are norm dense in $K(\ell^2(C))$, we obtain
	\[
		K(\ell^2(C))\subseteq \Cu(X,\cE).
	\]

	A Hilbert-space rank-one projection in $K(\ell^2(C))$ is plainly minimal in $\Cu(X,\cE)$. Conversely, let $p$ be a minimal projection in $\Cu(X,\cE)$. By \eqref{eq:block-decomposition}, $p$ is block diagonal. For every coarse component $C$, the projection $\chi_C$ belongs to $\ell^\infty(X)\subseteq\Cu(X,\cE)$. If two component blocks of $p$ were nonzero, then $p\chi_C$ would be a nonzero proper subprojection of $p$ for one of those components. Thus $p$ is supported on one component $C$. If $p$ had Hilbert-space rank greater than one, it would dominate a nonzero proper rank-one projection belonging to $K(\ell^2(C))$, again contradicting minimality.

	The complex linear span of the rank-one projections on $\ell^2(C)$ contains all finite-rank operators: by polarization, every rank-one operator is a linear combination of rank-one positive operators, and each nonzero rank-one positive operator is a scalar multiple of a rank-one projection. It follows that the closed linear span of all minimal projections is the norm closure of the algebraic direct sum of the $K(\ell^2(C))$, which is precisely the $c_0$-direct sum in \eqref{eq:socle}.

	To verify that this $c_0$-sum is an ideal, write $a=\bigoplus_C a_C\in\Cu(X,\cE)$ and $k=(k_C)_C\in J_X$ using \eqref{eq:block-decomposition}. Each $a_Ck_C$ and $k_Ca_C$ is compact, and
\[
\max\{\|a_Ck_C\|,\|k_Ca_C\|\}
\le \|a\|\,\|k_C\|.
\]
	Since $\|k_C\|\to0$ in the $c_0$-sense, both $ak$ and $ka$ belong to $J_X$. Finally, if $aJ_X=0$, then $ae_{xx}=0$ for every $x\in X$, and hence $a\delta_x=0$ for every $x$. Thus $a=0$, so $J_X$ is essential.
\end{proof}

The description in Proposition~\ref{prop:socle} is purely $C^*$-algebraic: $J_X$ is the closed socle. It is therefore preserved by every isomorphism.

\begin{prop}\label{prop:spatial}
	Let $(X,\cE)$ and $(Y,\cF)$ be uniformly locally finite coarse spaces. Every $C^*$-algebra isomorphism
	\[
		\Phi:\Cu(X,\cE)\longrightarrow\Cu(Y,\cF)
	\]
	is spatially implemented. More precisely, there is a unitary
	\[
		U:\ell^2(X)\longrightarrow\ell^2(Y)
	\]
	which sends each coarse-component summand of \eqref{eq:block-decomposition} onto a coarse-component summand for $Y$ and satisfies
	\begin{equation}\label{eq:spatial-implementation}
		\Phi(a)=UaU^*,
		\qquad a\in\Cu(X,\cE).
	\end{equation}
\end{prop}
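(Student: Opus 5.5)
The plan is the standard one for algebras containing the compacts: pass to the closed socle, decompose it into its simple summands, and build the unitary summand by summand.

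First, $\Phi$ carries minimal projections to minimal projections, so $\Phi(J_X)=J_Y$ by Proposition~\ref{prop:socle}. The simple closed ideals of the $c_0$-sum $J_X$ are exactly the summands $K(\ell^2(C))$, since any nonzero closed ideal of a $c_0$-sum of simple algebras is a $c_0$-sum of some of them. Hence $\Phi$ induces a bijection $\sigma:\pi_0(X,\cE)\to\pi_0(Y,\cF)$ together with $*$-isomorphisms
\[
K(\ell^2(C))\cong K(\ell^2(\sigma(C))).
\]
Every $*$-isomorphism between algebras of compact operators is implemented by a unitary. This gives unitaries $U_C:\ell^2(C)\to\ell^2(\sigma(C))$ with $\Phi(k)=U_CkU_C^*$ for $k\in K(\ell^2(C))$. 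In particular $|C|=|\sigma(C)|$, which is automatic because the unitary exists. Set $U=\bigoplus_C U_C$. This is a unitary $\ell^2(X)\to\ell^2(Y)$ that maps each component summand onto a component summand, and $\Phi(k)=UkU^*$ for all $k\in J_X$, by linearity and continuity.

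Second, I extend the formula from $J_X$ to all of $\Cu(X,\cE)$ using essentiality. For $a\in\Cu(X,\cE)$ and $k\in J_X$,
\[
\Phi(a)\,UkU^*=\Phi(ak)=UakU^*,
\]
so $(\Phi(a)-UaU^*)\,UkU^*=0$. The operators $UkU^*$ with $k\in J_X$ exhaust $J_Y$, which contains every $e_{yy}$. Therefore $\Phi(a)-UaU^*$ kills every $\delta_y$ and vanishes. Note that this does not require knowing in advance that $UaU^*$ lies in $\Cu(Y,\cF)$; the computation takes place in $B(\ell^2(Y))$. The conclusion \eqref{eq:spatial-implementation} follows.

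The only step with any substance is the first: showing that the induced map respects the summands. I would do it via minimal projections. Two minimal projections $p,q\in J_X$ lie in the same summand iff $pJ_Xq\neq0$. This relation is algebraic, so $\Phi$ preserves it. Each summand is then recovered as the closed ideal generated by one equivalence class of minimal projections, and that ideal is preserved as well. I expect no genuine obstacle here. Nonseparability is harmless, because the implementation theorem for $K(H)$ holds for arbitrary Hilbert spaces.
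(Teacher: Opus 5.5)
Your proposal is correct and follows essentially the same route as the paper. Both arguments identify $\Phi(J_X)=J_Y$, match the simple summands $K(\ell^2(C))$ through a bijection $\sigma$ of coarse components, implement each restricted isomorphism by a unitary $U_C$ and take $U=\bigoplus_C U_C$, and then extend from $J_X$ to $\Cu(X,\cE)$ by testing $\Phi(a)-UaU^*$ against the projections $e_{yy}\in J_Y$.

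The differences are cosmetic. The paper writes out the matrix-unit construction of each $U_C$ to emphasize that no separability is needed, whereas you cite the implementation theorem for $K(H)$ on arbitrary Hilbert spaces. Your criterion $pJ_Xq\neq0$ for minimal projections is an optional second justification of what the ideal-theoretic description of the summands already gives.
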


\begin{proof}
	By Proposition~\ref{prop:socle},
	\[
		\Phi(J_X)=J_Y.
	\]
	The summands $K(\ell^2(C))$, where $C\in\pi_0(X,\cE)$, are precisely the minimal nonzero closed ideals of $J_X$: each summand is simple, and distinct summands annihilate one another. Hence there is a bijection
	\[
		\sigma:\pi_0(X,\cE)\longrightarrow\pi_0(Y,\cF)
	\]
	such that $\Phi$ restricts to an isomorphism
	\[
		K(\ell^2(C))\longrightarrow K(\ell^2(\sigma(C)))
	\]
	for every $C$.

	We recall why such an isomorphism is spatial without any separability assumption. Choose an orthonormal basis $(\xi_i)_{i\in I}$ of $\ell^2(C)$, with associated matrix units $(e_{ij})_{i,j\in I}$, and fix $i_0\in I$. If $\eta_{i_0}$ is a unit vector in the range of $\Phi(e_{i_0i_0})$, set
	\[
		\eta_i=\Phi(e_{ii_0})\eta_{i_0},
		\qquad i\in I.
	\]
	The family $(\eta_i)_{i\in I}$ is orthonormal and $\Phi(e_{ij})\eta_k=\delta_{jk}\eta_i$. Its closed span is all of $\ell^2(\sigma(C))$: otherwise the nonzero orthogonal complement would be annihilated by the range algebra $K(\ell^2(\sigma(C)))$. Thus the unitary $U_C\xi_i=\eta_i$ implements the restriction of $\Phi$ to $K(\ell^2(C))$. Taking the Hilbert-space direct sum of the $U_C$ gives a unitary $U:\ell^2(X)\to\ell^2(Y)$ such that
	\begin{equation}\label{eq:spatial-on-socle}
		\Phi(k)=UkU^*,
		\qquad k\in J_X.
	\end{equation}

	It remains to pass from $J_X$ to the whole uniform Roe algebra. Let $a\in\Cu(X,\cE)$ and $k\in J_X$. By \eqref{eq:spatial-on-socle},
	\[
	\begin{aligned}
		\Phi(a)\Phi(k)
		&=\Phi(ak)
		=UakU^*\\
		&=(UaU^*)\Phi(k).
	\end{aligned}
	\]
	Thus
	\[
		(\Phi(a)-UaU^*)b=0,
		\qquad b\in J_Y.
	\]
	This equality takes place in $B(\ell^2(Y))$ and does not presuppose that $UaU^*$ belongs to the target algebra. Taking $b=e_{yy}$ for every $y\in Y$ shows that the difference annihilates every canonical basis vector. Therefore \eqref{eq:spatial-implementation} holds.
\end{proof}

In what follows, we fix an isomorphism $\Phi$ and a unitary $U$ as in Proposition~\ref{prop:spatial}, and write
\begin{equation}\label{eq:unitary-coefficients}
	u_{yx}=\langle U\delta_x,\delta_y\rangle,
	\qquad x\in X,\ y\in Y.
\end{equation}

\section{A random diagonal-unitary lemma}\label{sec:random}

The key analytic step is to obtain a uniform lower bound for the largest
coefficient in each row of a unitary that conjugates the full atomic
diagonal algebra into a uniform Roe algebra. This estimate does not
require the coarse space to be metrizable. Its proof uses a random
diagonal unitary and repeatedly interchanges nonnegative sums with
integrals or expectations. We therefore need the following standard
form of Tonelli's theorem for nonnegative series; see
\cite[p.~67, Theorem~2.37]{Fol99}.

\begin{lem}[\cite{Fol99}]\label{lem:tonelli}
	Let $(\Omega,\Sigma,\mu)$ be a measure space, and let
	$(f_n)_{n\in\mathbb N}$ be a sequence of nonnegative measurable
	functions on $\Omega$. Then
	\[
	\int_\Omega \sum_{n=1}^{\infty} f_n\,d\mu
	=
	\sum_{n=1}^{\infty}\int_\Omega f_n\,d\mu,
	\]
	where both sides are allowed to take the value $+\infty$.
	
	In particular, if $I$ and $J$ are countable sets and
	$(a_{ij})_{(i,j)\in I\times J}$ is a family of nonnegative real
	numbers, then
	\[
	\sum_{i\in I}\sum_{j\in J}a_{ij}
	=
	\sum_{j\in J}\sum_{i\in I}a_{ij}.
	\]
	Moreover, if $(X_n)_{n\in\mathbb N}$ is a sequence of nonnegative
	random variables, then
	\[
	\mathbb E\left[\sum_{n=1}^{\infty}X_n\right]
	=
	\sum_{n=1}^{\infty}\mathbb E[X_n].
	\]
\end{lem}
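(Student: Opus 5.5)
Lemma~\ref{lem:tonelli} is the classical Tonelli theorem for nonnegative series, and the plan is to deduce all three assertions from the monotone convergence theorem rather than to reprove product-measure theory.

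For the first assertion, set $g_N=\sum_{n=1}^{N}f_n$. Each $g_N$ is a nonnegative measurable function, and the sequence increases pointwise to $\sum_{n=1}^\infty f_n$; the limit is measurable with values in $[0,\infty]$. Linearity of the integral for finitely many nonnegative functions gives $\int g_N\,d\mu=\sum_{n=1}^N\int f_n\,d\mu$. The monotone convergence theorem then lets us pass to the limit on both sides, including when either side is $+\infty$. This is the only step with real content, and it is entirely standard, so no step is a genuine obstacle. The one point needing care is to allow extended-real values throughout.

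For the double-series statement, take $\Omega=I$ with counting measure $\mu$. Integration against counting measure is exactly summation of nonnegative families: $\int_I h\,d\mu=\sum_{i\in I}h(i)$ as the supremum of finite partial sums, so the value is independent of any enumeration. If $J$ is finite, the claim is just finite additivity. Otherwise, enumerate $J=\{j_1,j_2,\dots\}$ and put $f_n(i)=a_{ij_n}$. The first part then gives
\[
\sum_{i\in I}\sum_{n}a_{ij_n}=\sum_{n}\sum_{i\in I}a_{ij_n}.
\]
Because nonnegative sums do not depend on the order of summation, this is the stated identity.

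For the expectation statement, apply the first part to the underlying probability space with $f_n=X_n$, since $\mathbb E$ is integration against the probability measure.
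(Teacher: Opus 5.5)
Your proof is correct, but it takes a different route from the paper. The paper gives no proof. It cites the Fubini--Tonelli theorem (Folland, p.~67, Theorem~2.37), so the three assertions are read as Tonelli for $\mu$ times counting measure on $\mathbb N$, for counting measures on $I\times J$, and for a probability measure times counting measure.

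You instead derive everything from the monotone convergence theorem applied to the partial sums $g_N=\sum_{n\le N}f_n$, which is essentially Folland's Theorem~2.15 rather than 2.37. You then specialize to counting measure and to a probability space.

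Your route is more elementary and in one respect fits the statement better. Folland's product-measure theorem assumes $\sigma$-finite factors, while the first assertion of the lemma is stated for an arbitrary measure space $(\Omega,\Sigma,\mu)$. Monotone convergence needs no $\sigma$-finiteness. Your argument also shows that countability of $I$ is never used in the double-series identity; only an enumeration of $J$ is needed.

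The citation buys brevity and treats the double series symmetrically without enumerating $J$. In the paper's applications every measure is a probability measure or counting measure on a countable set, so the $\sigma$-finiteness hypothesis causes no problem there.
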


We now establish the required coefficient estimate.

\begin{lem}\label{lem:random-diagonal}
	Let $(Y,\cF)$ be a nonempty uniformly locally finite coarse space, let $X$ be a nonempty set, and let
$
		U:\ell^2(X)\longrightarrow\ell^2(Y)
$
	be unitary. If
	\begin{equation}\label{eq:diagonal-conjugacy}
		U\ell^\infty(X)U^*
		\subseteq
		\Cu(Y,\cF),
	\end{equation}
	then
	\begin{equation}\label{eq:row-lower-bound}
		\inf_{y\in Y}\sup_{x\in X}
		|\langle U\delta_x,\delta_y\rangle|>0.
	\end{equation}
\end{lem}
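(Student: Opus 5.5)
We argue by contradiction and construct a single random diagonal unitary whose conjugate cannot be approximated by any entourage-supported operator. Suppose \eqref{eq:row-lower-bound} fails. Then there are points $y_1,y_2,\ldots\in Y$ with
\[
\varepsilon_n:=\sup_{x\in X}|u_{y_nx}|,\qquad \sum_n\varepsilon_n^2<\infty,
\]
where we pass to a subsequence to get summability. Since every row of $U$ is square summable, the set $X_0=\{x:u_{y_nx}\neq0\text{ for some }n\}$ is countable. Let $(\omega_x)_{x\in X_0}$ be independent Steinhaus variables, i.e.\ uniform on the unit circle, and set $\omega_x=1$ for $x\notin X_0$. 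Put $D_\omega=\diag(\omega_x)\in\ell^\infty(X)$ and $V_\omega=UD_\omega U^*$. By \eqref{eq:diagonal-conjugacy}, $V_\omega\in\Cu(Y,\cF)$ for every $\omega$. We will use only its matrix coefficients
\[
V_\omega(y',y_n)=\sum_{x}\omega_x\,u_{y'x}\overline{u_{y_nx}}.
\]
These are measurable functions of countably many Steinhaus variables.

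\emph{Step 1: a fourth-moment bound for a single column.} Fix $n$ and $y'$, and write $a_x=u_{y'x}\overline{u_{y_nx}}$. Expanding $|\sum_x\omega_xa_x|^4$, only the pairings $x_1=x_3,\ x_2=x_4$ and $x_1=x_4,\ x_2=x_3$ survive in expectation. This gives the Khintchine-type bound $\mathbb E|\sum\omega_xa_x|^4\le 2(\sum_x|a_x|^2)^2$; for infinite sums we first truncate and then pass to the limit using Fatou. We also have
\[
\sum_x|a_x|^2\le\varepsilon_n^2\sum_x|u_{y'x}|^2=\varepsilon_n^2
\quad\text{and}\quad
\sum_{y'}\sum_x|a_x|^2=\sum_x|u_{y_nx}|^2=1 .
\]
Summing over $y'$ with Lemma~\ref{lem:tonelli} therefore yields
\[
\mathbb E\sum_{y'\in Y}|V_\omega(y',y_n)|^4\le 2\varepsilon_n^2 .
\]
The sum over $y'$ runs over the countable set of $y'$ with $u_{y'x}\neq0$ for some $x\in X_0$.

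\emph{Step 2: one good $\omega$ for all $n$.} Summing the bound of Step 1 over $n$ and applying Lemma~\ref{lem:tonelli} again gives
\[
\mathbb E\sum_n\sum_{y'}|V_\omega(y',y_n)|^4\le2\sum_n\varepsilon_n^2<\infty .
\]
Hence there is a single $\omega$ for which $s_n:=\sum_{y'}|V_\omega(y',y_n)|^4\to0$. This is the crucial point, because the entourage approximating $V_\omega$ will depend on $\omega$. Choosing $\omega$ before the entourage is exactly what the summability $\sum_n\varepsilon_n^2<\infty$ buys us.

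\emph{Step 3: contradiction with approximability.} For this $\omega$, choose $E\in\cF$ and an $E$-supported $S$ with $\|V_\omega-S\|<1/2$. The vector $S\delta_{y_n}$ is supported in $E^{-1}[y_n]$, a set of at most $N:=N_{E^{-1}}$ points by \eqref{eq:ulf}. Since $V_\omega$ is unitary, $\|V_\omega\delta_{y_n}\|=1$, and therefore
\[
\sum_{y'\in E^{-1}[y_n]}|V_\omega(y',y_n)|^2\ge 1-\tfrac14=\tfrac34 .
\]
Consequently some $y'$ has $|V_\omega(y',y_n)|^2\ge 3/(4N)$, so $s_n\ge 9/(16N^2)$ for every $n$, contradicting $s_n\to0$.

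The main obstacle is Step 2 together with the quantifier order. A bound in expectation for each fixed $n$ is easy, but the approximating entourage is only produced after $\omega$ is fixed, so we need almost sure smallness along the whole sequence. The fourth moment, rather than the second, is what detects concentration on boundedly many coordinates uniformly in the (unknown) entourage. The summable subsequence together with Tonelli handles the uniformity. The remaining points are routine: the measure-theoretic care in restricting to countably many coordinates, and the justification of the fourth-moment expansion for infinite sums by truncation.
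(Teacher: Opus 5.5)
Your proof is correct and follows essentially the same route as the paper: a Steinhaus-randomized diagonal unitary on countably many coordinates, the fourth-moment bound $\mathbb E\sum_{y'}|V_\omega(y',y_n)|^4\le 2\varepsilon_n^2$, a single realization with $s_n\to0$ obtained via Tonelli, and a contradiction with the concentration of $V_\omega\delta_{y_n}$ on at most $N_{E^{-1}}$ coordinates. The differences are cosmetic: you use Fatou instead of $L^4$ convergence of the truncations, a pigeonhole bound on a single entry instead of the paper's Cauchy--Schwarz bound on $L_N$, and tolerance $1/2$ instead of $1/4$.
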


\begin{proof}
	Write $u_{yx}=\langle U\delta_x,\delta_y\rangle$. Suppose that \eqref{eq:row-lower-bound} fails, and put
    \[
        m(y):=\sup_{x\in X}|u_{yx}|^2,
        \qquad y\in Y.
    \]
    Then $\inf_{y\in Y}m(y)=0$, while $m(y)>0$ for every $y$, since $U^*\delta_y$ is a nonzero unit vector. Choose $y_1$ with $m(y_1)<2^{-1}$ and, recursively, choose $y_n$ so that
    \[
        m(y_n)<\min\{2^{-n},m(y_1),\ldots,m(y_{n-1})\}.
    \]
    The points $y_n$ are pairwise distinct, and
    \begin{equation}\label{eq:mn-definition}
        m_n:=\sup_{x\in X}|u_{y_nx}|^2<2^{-n},
        \qquad n\in\mathbb N.
    \end{equation}
    In particular, $\sum_n m_n<\infty$.

	We first remove a minor measurability issue caused by arbitrary cardinalities. For each $n$, the vector $U^*\delta_{y_n}\in\ell^2(X)$ has countable support. Hence
	\[
		S=\bigcup_{n=1}^{\infty}\supp(U^*\delta_{y_n})
	\]
	is countable. Moreover,
	\[
		Z=\bigcup_{x\in S}\supp(U\delta_x)
	\]
	is a countable subset of $Y$. Only the coordinates in $S$ will be randomized.

	On the product probability space $\mathbb T^S$, endowed with product Haar probability measure, let $(\varepsilon_x)_{x\in S}$ be the coordinate Steinhaus random variables. Put $\varepsilon_x=1$ for $x\notin S$, and define the diagonal unitary
	\[
		D_\varepsilon\delta_x=\varepsilon_x\delta_x.
	\]
	By \eqref{eq:diagonal-conjugacy}, every realization of
    \[
        T=UD_\varepsilon U^*
    \]
	belongs to $\Cu(Y,\cF)$. Writing $T_{z,y}=\langle T\delta_y,\delta_z\rangle$, for $z\in Y$ and $n\in\mathbb N$ we have
	\begin{equation}\label{eq:random-entry}
		T_{z,y_n}
		=
		\sum_{x\in S}
		\varepsilon_xu_{zx}\overline{u_{y_nx}}.
	\end{equation}
	For each fixed $z$ and $n$, the series in \eqref{eq:random-entry} is absolutely convergent, since
    \[
        \sum_{x\in S}|u_{zx}|\,|u_{y_nx}|
        \le
        \left(\sum_x|u_{zx}|^2\right)^{1/2}
        \left(\sum_x|u_{y_nx}|^2\right)^{1/2}
        =1.
    \]
    Moreover, $T\delta_{y_n}$ is supported in the fixed countable set $Z$.

	Set
	\[
		v_{z,n}
		=
		\sum_{x\in S}|u_{zx}|^2|u_{y_nx}|^2.
	\]
	For completeness, if $(\zeta_k)_{k=1}^N$ are independent Steinhaus variables and $a_1,\ldots,a_N\in\mathbb C$, then
	\[
		\mathbb E\left|\sum_{k=1}^N a_k\zeta_k\right|^4
		=
		2\left(\sum_{k=1}^N|a_k|^2\right)^2
		-
		\sum_{k=1}^N|a_k|^4.
	\]
	Indeed, after expansion, a mixed fourth-order term has nonzero expectation exactly when the multisets of unbarred and barred indices coincide. The finite subsums in \eqref{eq:random-entry} converge uniformly in the Steinhaus variables, because the series is absolutely summable. They therefore converge in $L^4(\mathbb T^S)$. Applying this identity to the finite subsums and passing to the limit gives
	\begin{equation}\label{eq:fourth-moment-pointwise}
		\mathbb E|T_{z,y_n}|^4
		=
		2v_{z,n}^2
		-
		\sum_{x\in S}|u_{zx}|^4|u_{y_nx}|^4
		\le 2v_{z,n}^2.
	\end{equation}
	Since $\supp(U^*\delta_{y_n})\subseteq S$ and $\supp(U\delta_x)\subseteq Z$ for every $x\in S$, Lemma~\ref{lem:tonelli} and unitarity give
	\begin{equation}\label{eq:v-sum}
	\begin{aligned}
		\sum_{z\in Z}v_{z,n}
		&=
		\sum_{x\in S}|u_{y_nx}|^2
		\sum_{z\in Z}|u_{zx}|^2
		=1,\\
		v_{z,n}
		&\le
		m_n\sum_{x\in S}|u_{zx}|^2
		\le m_n.
	\end{aligned}
	\end{equation}
	Combining \eqref{eq:fourth-moment-pointwise} and \eqref{eq:v-sum}, and using Lemma~\ref{lem:tonelli} once more, we obtain
	\begin{equation}\label{eq:fourth-moment-global}
		\mathbb E\sum_{z\in Z}|T_{z,y_n}|^4
		\le
		2\sum_{z\in Z}v_{z,n}^2
		\le 2m_n.
	\end{equation}
	Define
    \[
        q_n:=\sum_{z\in Z}|T_{z,y_n}|^4.
    \] Each $q_n$ is measurable, since $Z$ is countable and every matrix
    coefficient in \eqref{eq:random-entry} is measurable.
    By \eqref{eq:mn-definition}, \eqref{eq:fourth-moment-global}, and Lemma~\ref{lem:tonelli},
    \[
        \mathbb E\left(\sum_{n=1}^{\infty}q_n\right)
        \le 2\sum_{n=1}^{\infty}m_n<\infty.
    \]
    Hence $\sum_n q_n<\infty$ almost surely. Fix a realization with this property. Then
    \begin{equation}\label{eq:q-to-zero}
        q_n\longrightarrow0.
    \end{equation}

	For a unit vector $\xi\in\ell^2(Y)$ and $N\in\mathbb N$, define
	\[
		L_N(\xi)
		=
		\sup\{\|\chi_A\xi\|^2:A\subseteq Y,\ |A|\le N\}.
	\]
	For every $A\subseteq Y$ with $|A|\le N$, the Cauchy--Schwarz inequality gives
	\[
		\left(\sum_{z\in A}|T_{z,y_n}|^2\right)^2
		\le |A|\sum_{z\in A}|T_{z,y_n}|^4
		\le Nq_n.
	\]
	Taking the supremum over $A$ yields
	\begin{equation}\label{eq:top-N-bound}
		L_N(T\delta_{y_n})^2
		\le Nq_n.
	\end{equation}
	Thus, for every fixed $N$, equations \eqref{eq:q-to-zero} and \eqref{eq:top-N-bound} imply
	\begin{equation}\label{eq:top-N-vanishing}
		L_N(T\delta_{y_n})\longrightarrow0.
	\end{equation}

	On the other hand, $T\in\Cu(Y,\cF)$. Choose $F\in\cF$ and an $F$-supported operator $S_0$ such that
	\[
		\|T-S_0\|<\frac14.
	\]
	Since $F^{-1}\in\cF$, uniform local finiteness gives
    \[
        N_{F^{-1}}:=\sup_{y\in Y}|F^{-1}[y]|<\infty.
    \]
    Put $A_y=F^{-1}[y]$. The vector $S_0\delta_y$ is supported on $A_y$, and therefore
    \[
        \|\chi_{Y\setminus A_y}T\delta_y\|
        =\|\chi_{Y\setminus A_y}(T-S_0)\delta_y\|
        <\frac14.
    \]
    Since $T$ is unitary, it follows that
    \[
        L_{N_{F^{-1}}}(T\delta_y)
        \ge \|\chi_{A_y}T\delta_y\|^2
        >1-\frac1{16}=\frac{15}{16}
    \]
    for every $y\in Y$. Taking $y=y_n$ contradicts \eqref{eq:top-N-vanishing}. This proves \eqref{eq:row-lower-bound}.
\end{proof}

Applying Lemma~\ref{lem:random-diagonal} to an isomorphism and then to its inverse produces lower bounds in both directions.

\begin{cor}\label{cor:two-sided-coefficients}
	Let $(X,\cE)$ and $(Y,\cF)$ be nonempty uniformly locally finite coarse spaces, let $\Phi:\Cu(X,\cE)\to\Cu(Y,\cF)$ be an isomorphism, and let $U$ implement $\Phi$ as in Proposition~\ref{prop:spatial}. Then
	\[
		\inf_{y\in Y}\sup_{x\in X}|u_{yx}|>0,
		\qquad
		\inf_{x\in X}\sup_{y\in Y}|u_{yx}|>0.
	\]
\end{cor}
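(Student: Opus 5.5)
The plan is to apply Lemma~\ref{lem:random-diagonal} twice, once to $U$ and once to $U^*$, after checking its hypothesis \eqref{eq:diagonal-conjugacy} in each direction.

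\emph{First inequality.} By \eqref{eq:diagonal-inclusion} we have $\ell^\infty(X)\subseteq\Cu(X,\cE)$. Proposition~\ref{prop:spatial} gives $UaU^*=\Phi(a)\in\Cu(Y,\cF)$ for every $a\in\ell^\infty(X)$, so $U\ell^\infty(X)U^*\subseteq\Cu(Y,\cF)$. The space $Y$ is nonempty and uniformly locally finite, so Lemma~\ref{lem:random-diagonal} applies directly and yields $\inf_{y}\sup_x|u_{yx}|>0$.

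\emph{Second inequality.} Consider the inverse isomorphism $\Phi^{-1}:\Cu(Y,\cF)\to\Cu(X,\cE)$. From $\Phi(a)=UaU^*$ we get $\Phi^{-1}(b)=U^*bU$ for all $b\in\Cu(Y,\cF)$: put $a=\Phi^{-1}(b)$, so that $b=UaU^*$ and hence $a=U^*bU$. Applying \eqref{eq:diagonal-inclusion} to $Y$ gives $U^*\ell^\infty(Y)U\subseteq\Cu(X,\cE)$. Now apply Lemma~\ref{lem:random-diagonal} with the roles of the spaces exchanged, taking the unitary $U^*:\ell^2(Y)\to\ell^2(X)$ and the target space $(X,\cE)$. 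This gives
\[
\inf_{x\in X}\sup_{y\in Y}|\langle U^*\delta_y,\delta_x\rangle|>0 .
\]
Since $\langle U^*\delta_y,\delta_x\rangle=\overline{\langle U\delta_x,\delta_y\rangle}=\overline{u_{yx}}$, this is exactly the second bound.

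No step is a real obstacle; the argument is a direct application of the lemma. The points to check are that the lemma requires only the diagonal inclusion (and not that the whole algebra is preserved), that the nonemptiness hypotheses hold for both spaces, and that the conjugation in the matrix coefficient is harmless because only absolute values appear.
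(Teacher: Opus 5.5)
Your proposal is correct and follows the paper's argument: you apply Lemma~\ref{lem:random-diagonal} to $U$ using $U\ell^\infty(X)U^*=\Phi(\ell^\infty(X))\subseteq\Cu(Y,\cF)$, and then to $U^*$ using $U^*\ell^\infty(Y)U=\Phi^{-1}(\ell^\infty(Y))\subseteq\Cu(X,\cE)$. Your extra checks, namely the formula for $\Phi^{-1}$ and the identity $\langle U^*\delta_y,\delta_x\rangle=\overline{u_{yx}}$, spell out steps the paper leaves implicit.
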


\begin{proof}
	The inclusion
	\[
		U\ell^\infty(X)U^*
		\subseteq \Cu(Y,\cF)
	\]
	follows from \eqref{eq:diagonal-inclusion} and \eqref{eq:spatial-implementation}, so Lemma~\ref{lem:random-diagonal} gives the first inequality. Applying the lemma to $U^*$ and
	\[
		U^*\ell^\infty(Y)U
		\subseteq \Cu(X,\cE)
	\]
	gives the second.
\end{proof}

Choose constants $\alpha,\beta>0$ such that
\[
    0<\alpha<\inf_{x\in X}\sup_{y\in Y}|u_{yx}|,
    \qquad
    0<\beta<\inf_{y\in Y}\sup_{x\in X}|u_{yx}|.
\]
We may therefore select maps
\begin{equation}\label{eq:f-g-selection}
	f:X\longrightarrow Y,
	\qquad
	g:Y\longrightarrow X
\end{equation}
such that
\begin{equation}\label{eq:selected-coefficients}
	|u_{f(x),x}|\ge\alpha,
	\qquad
	|u_{y,g(y)}|\ge\beta.
\end{equation}
These maps are uniformly finite-to-one. Indeed, orthonormality of the rows and columns of $U$ gives
\begin{equation}\label{eq:fibre-bounds}
	|f^{-1}(y)|\le\alpha^{-2},
	\qquad
	|g^{-1}(x)|\le\beta^{-2}.
\end{equation}

The remaining question is whether the maps in \eqref{eq:f-g-selection} are controlled and close to mutual inverses. The next section develops the finite paving needed to overcome cancellation in conjugated partial translations.

\section{A finite double paving lemma}\label{sec:paving}

The following lemma is a weighted finite-coloring principle. Sums of nonnegative families over arbitrary index sets are understood as suprema of finite subsums. The simultaneous assumptions on row sums and column sums are essential for the symmetric reduction used in the proof.

\begin{lem}\label{lem:double-paving}
	Let $I$ be an arbitrary set and let $(b_{ij})_{i,j\in I}$ be a family of nonnegative real numbers such that
	\begin{equation}\label{eq:paving-hypotheses}
		b_{ii}=0,
		\qquad
		\sup_{i\in I}\sum_{j\in I}b_{ij}\le M,
		\qquad
		\sup_{j\in I}\sum_{i\in I}b_{ij}\le M.
	\end{equation}
	Then, for every $\eta>0$, there is a finite partition
$
        I=I_1\sqcup\cdots\sqcup I_r
$
	such that
	\begin{equation}\label{eq:paving-conclusion}
		\sum_{\substack{j\in I_k\\j\ne i}}b_{ij}<\eta,
		\qquad i\in I_k,
	\end{equation}
	for every $k\in\{1,\ldots,r\}$.
\end{lem}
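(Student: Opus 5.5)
The plan is to symmetrize, settle the finite case by a potential-minimization (local search) argument, and then pass to arbitrary $I$ by compactness, exactly as in the proof of Lemma~\ref{lem:matching-decomposition}.

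\textbf{Symmetrization.} Put $c_{ij}=b_{ij}+b_{ji}$. Then $c_{ij}=c_{ji}\ge0$, $c_{ii}=0$, and by \eqref{eq:paving-hypotheses} every row sum satisfies $\sum_j c_{ij}\le 2M$. Since $b_{ij}\le c_{ij}$, it suffices to prove \eqref{eq:paving-conclusion} with $c$ in place of $b$. Fix $\eta'$ with $0<\eta'<\eta$, and fix an integer $r>2M/\eta'$. I aim to show that there is a colouring $\kappa:I\to\{1,\ldots,r\}$ such that
\[
\sum_{j:\,\kappa(j)=\kappa(i)} c_{ij}\le\eta' \qquad\text{for every } i\in I.
\]

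\textbf{Finite case.} Let $I$ be finite. Among all maps $\kappa:I\to\{1,\ldots,r\}$, choose one minimizing the potential
\[
P(\kappa)=\sum_{\kappa(i)=\kappa(j)}c_{ij}.
\]
Suppose some $i$ has $s_k:=\sum_{\kappa(j)=k}c_{ij}>\eta'$, where $k=\kappa(i)$. Since $\sum_{l=1}^r s_l\le 2M$, some colour $l$ satisfies $s_l\le 2M/r<\eta'<s_k$. Moving $i$ to colour $l$ changes $P$ by $2(s_l-s_k)<0$. Here symmetry of $c$ and $c_{ii}=0$ are used, so that $i$'s own term does not interfere. This contradicts minimality, so the minimizer has the required property. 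This exchange step is the heart of the argument, and it is exactly where the two-sided row and column bound enters, via the symmetrization.

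\textbf{Arbitrary $I$.} Give $\{1,\ldots,r\}^I$ the product topology; it is compact. For each finite $F\subseteq I$, let $K_F$ be the set of colourings $\kappa$ such that
\[
\sum_{j\in F,\ \kappa(j)=\kappa(i)}c_{ij}\le\eta' \qquad\text{for all } i\in F.
\]
This condition depends only on $\kappa|_F$, so $K_F$ is closed. It is nonempty, because the finite case applies to $(c_{ij})_{i,j\in F}$, whose row sums are still at most $2M$. Moreover $K_{F_1\cup F_2}\subseteq K_{F_1}\cap K_{F_2}$, since the sums are over nonnegative terms. The finite intersection property therefore gives some $\kappa\in\bigcap_F K_F$.

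\textbf{Conclusion.} Fix $i$ and let $I_k$ be the colour class of $i$. Every finite subsum of $\sum_{j\in I_k}c_{ij}$ is bounded by $\eta'$, by taking $F$ to contain $i$ and the finitely many indices involved. Since sums over arbitrary index sets are suprema of finite subsums, $\sum_{j\in I_k,\,j\ne i}b_{ij}\le\eta'<\eta$. The nonempty colour classes then give the required partition $I=I_1\sqcup\cdots\sqcup I_r$.

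The main obstacle is ensuring the relevant conditions are closed for the compactness step. This is why I pass from the strict inequality $<\eta$ to the non-strict bound $\le\eta'$ and impose it only on finite truncations. The decrease of the potential in the finite case is otherwise routine.
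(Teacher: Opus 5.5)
Your proof is correct and follows essentially the same route as the paper. Both arguments symmetrize to $b_{ij}+b_{ji}$, take a coloring with $r>2M/\eta$ colors minimizing the monochromatic weight so that each vertex's own-color weight is at most $2M/r$, and pass to arbitrary $I$ by compactness of $\{1,\ldots,r\}^I$ using closed conditions on finite truncations. The only differences are cosmetic: your auxiliary $\eta'$ is unnecessary, since the paper uses the threshold $2M/r<\eta$ directly, and you index the closed conditions by finite sets $F$ rather than by pairs $(i,J)$.
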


\begin{proof}
	Define the symmetric weights
$
		w_{ij}=b_{ij}+b_{ji}.
$
	By \eqref{eq:paving-hypotheses},
	\begin{equation}\label{eq:symmetric-weight-bound}
		\sum_{j\in I}w_{ij}\le2M,
		\qquad i\in I.
	\end{equation}
	Choose $r\in\mathbb N$ such that
	\begin{equation}\label{eq:number-colours}
		\frac{2M}{r}<\eta.
	\end{equation}

	Suppose first that $I$ is finite. Among all maps $c:I\to\{1,\ldots,r\}$, choose one minimizing the total weight of monochromatic unordered pairs. For $i\in I$ and $\ell\in\{1,\ldots,r\}$, set
	\[
		W_\ell(i)=\sum_{\substack{j\ne i\\c(j)=\ell}}w_{ij}.
	\]
	If $c(i)=k$, minimality under recoloring the single vertex $i$ implies
	\[
		W_k(i)\le W_\ell(i)
		\qquad(1\le\ell\le r).
	\]
	Consequently, \eqref{eq:symmetric-weight-bound} gives
	\[
		rW_k(i)
		\le
		\sum_{\ell=1}^{r}W_\ell(i)
		\le2M.
	\]
	Together with \eqref{eq:number-colours} and the inequality $b_{ij}\le w_{ij}$, this proves \eqref{eq:paving-conclusion} in the finite case.

	For general $I$, consider the compact product space
	\[
		\Omega=\{1,\ldots,r\}^{I}.
	\]
	For $i\in I$ and finite $J\subseteq I\setminus\{i\}$, impose the closed condition
	\[
		\sum_{\substack{j\in J\\c(j)=c(i)}}w_{ij}
		\le \frac{2M}{r}.
	\]
	Every finite family of these conditions involves a finite set of vertices. Applying the finite argument to the weights restricted to that set gives a coloring satisfying the chosen conditions. The finite intersection property therefore gives a coloring satisfying all of them. Taking the supremum over finite $J$, using \eqref{eq:number-colours}, and again noting that $b_{ij}\le w_{ij}$ yields \eqref{eq:paving-conclusion}.
\end{proof}

The role of Lemma~\ref{lem:double-paving} is now transparent. A selected coefficient of $Ue_{ab}U^*$ contributes a main term of fixed size, while all other selected matrix units contribute a matrix whose row and column $\ell^1$ norms are bounded by the Cauchy--Schwarz inequality. The finite paving makes the total unwanted contribution strictly smaller than the main term.

\section{Construction of the coarse equivalence}\label{sec:proof}

We retain the notation of Sections~\ref{sec:spatial} and~\ref{sec:random}. Thus $\Phi=\Ad(U)$, the coefficients of $U$ are given by \eqref{eq:unitary-coefficients}, and the maps $f$ and $g$ satisfy \eqref{eq:selected-coefficients} and \eqref{eq:fibre-bounds}.

We first prove controlledness.

\begin{prop}\label{prop:f-controlled}
	The map $f:X\to Y$ selected in \eqref{eq:f-g-selection} is controlled.
\end{prop}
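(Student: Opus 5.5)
The plan is to reduce to partial translations and detect each image pair through a uniformly large matrix coefficient, using Lemma~\ref{lem:double-paving} to control cancellation. Fix $E\in\cE$. We must show that $(f\times f)(E)\in\cF$. Since $\cF$ is closed under finite unions, Lemma~\ref{lem:matching-decomposition} (applied with $d=N_E$, which bounds both the row and the column sections of $E$) lets us assume that $E$ is replaced by a single matching $R\subseteq E$.

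For a matching $R$ and any subset $Q\subseteq R$, the operator $V_Q=\sum_{(a,b)\in Q}e_{ab}$ is a partial isometry supported in $E$, so $V_Q\in\Cu(X,\cE)$. Hence $UV_QU^*=\Phi(V_Q)\in\Cu(Y,\cF)$. Its coefficient at $(f(a),f(b))$ is
\[
\langle UV_QU^*\delta_{f(b)},\delta_{f(a)}\rangle=\sum_{(a',b')\in Q}u_{f(a),a'}\,\overline{u_{f(b),b'}}.
\]
For $(a,b)\in Q$, the term indexed by $(a',b')=(a,b)$ has modulus at least $\alpha^2$ by \eqref{eq:selected-coefficients}. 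We regard all other terms as cross terms.

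We apply Lemma~\ref{lem:double-paving} with index set $I=R$ and weights
\[
b_{(a,b),(a',b')}=|u_{f(a),a'}|\,|u_{f(b),b'}| \quad\text{for }(a',b')\ne(a,b),
\qquad b_{ii}=0 .
\]
\emph{Row sums.} Fix $(a,b)$. Since $R$ is a matching, the first coordinates $a'$ are distinct and the second coordinates $b'$ are distinct. Cauchy--Schwarz therefore bounds the row sum by
\[
\Bigl(\sum_{a'}|u_{f(a),a'}|^2\Bigr)^{1/2}\Bigl(\sum_{b'}|u_{f(b),b'}|^2\Bigr)^{1/2}\le1 .
\]
\emph{Column sums.} Fix $(a',b')$ and sum over $(a,b)\in R$. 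Cauchy--Schwarz bounds the sum by $\bigl(\sum_a|u_{f(a),a'}|^2\bigr)^{1/2}\bigl(\sum_b|u_{f(b),b'}|^2\bigr)^{1/2}$, where again the $a$'s are distinct and the $b$'s are distinct. By \eqref{eq:fibre-bounds}, each $y\in Y$ equals $f(a)$ for at most $\alpha^{-2}$ values of $a$. Combined with column orthonormality, this gives $\sum_a|u_{f(a),a'}|^2\le\alpha^{-2}\sum_y|u_{y,a'}|^2=\alpha^{-2}$. The same holds for the $b$-sum, so the column sums are at most $\alpha^{-2}$.

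Thus the hypotheses \eqref{eq:paving-hypotheses} hold with $M=\max\{1,\alpha^{-2}\}$. Lemma~\ref{lem:double-paving} with $\eta=\alpha^2/2$ gives a partition $R=Q_1\sqcup\cdots\sqcup Q_r$ in which, for $(a,b)\in Q_k$, the cross terms within $Q_k$ total less than $\alpha^2/2$. Consequently $|\langle UV_{Q_k}U^*\delta_{f(b)},\delta_{f(a)}\rangle|>\alpha^2/2$ for every $(a,b)\in Q_k$. Lemma~\ref{lem:threshold} applied to $UV_{Q_k}U^*$ with $c=\alpha^2/2$ then shows that $(f\times f)(Q_k)$ lies in an entourage of $\cF$. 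Taking the union over $k=1,\ldots,r$ and then over the finitely many matchings covering $E$ yields $(f\times f)(E)\in\cF$.

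The main obstacle is the column-sum bound. A naive orthonormality argument fails there, because distinct pairs may share the same image $f(a)$. This is exactly where the uniform finite-to-one property \eqref{eq:fibre-bounds} is needed, and it is why the double (row and column) form of the paving lemma is used.
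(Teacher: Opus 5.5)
Your argument is correct and uses the same ingredients as the paper: a matching decomposition, the double paving lemma, conjugated partial isometries, and Lemma~\ref{lem:threshold}. The difference is where you apply the matching decomposition.

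The paper decomposes the image relation $(f\times f)(E)\subseteq Y\times Y$ into matchings. This is where the fibre bound \eqref{eq:fibre-bounds} enters, since it bounds the sections of the image by $K_f\max\{N_E,N_{E^{-1}}\}$. The paper then lifts each matching $\{(y_i,z_i)\}$ to chosen preimages $(a_i,b_i)\in E$. Because both $(y_i),(z_i)$ and $(a_i),(b_i)$ are injective, every row and column sum is at most $1$, and the paving is applied with $M=1$.

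You instead decompose $E$ itself. This avoids choosing preimages, and the paving directly yields the sets $(f\times f)(Q_k)$. The price is that the image points $f(a)$ can repeat, so the fibre bound reappears in your column-sum estimate. You therefore pave with $M=\alpha^{-2}$, which only increases the number of colours and does not affect the conclusion.

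One small correction: $N_E$ bounds only the horizontal sections of $E$. The vertical sections are bounded by $N_{E^{-1}}$, so Lemma~\ref{lem:matching-decomposition} should be applied with $d=\max\{N_E,N_{E^{-1}}\}$, not $d=N_E$.
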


\begin{proof}
	Fix $E\in\cE$. If $E=\varnothing$, there is nothing to prove, so assume that $E\ne\varnothing$. Set
	\begin{equation}\label{eq:image-relation}
		R=(f\times f)(E)\subseteq Y\times Y.
	\end{equation}
	Let
$
		K_f=\sup_{y\in Y}|f^{-1}(y)|<\infty.
$
	By \eqref{eq:ulf} and \eqref{eq:fibre-bounds}, the horizontal sections of $R$ have cardinality at most $K_fN_E$, while the vertical sections have cardinality at most $K_fN_{E^{-1}}$. With
$
        d=K_f\max\{N_E,N_{E^{-1}}\},
$
    Lemma~\ref{lem:matching-decomposition} therefore writes $R$ as a union of at most $d$ matchings. It suffices to show that each such matching is an entourage.

	Fix one matching
	\begin{equation}\label{eq:target-matching}
		R_0=\{(y_i,z_i):i\in I\}\subseteq R.
	\end{equation}
	For every $i\in I$, choose $(a_i,b_i)\in E$ such that
	\begin{equation}\label{eq:chosen-preimages}
		f(a_i)=y_i,
		\qquad
		f(b_i)=z_i.
	\end{equation}
	Because the first coordinates $y_i$ in \eqref{eq:target-matching} are distinct, the points $a_i$ are distinct; similarly, the $b_i$ are distinct.

	For $i,j\in I$, put
$
		c_{ij}=u_{y_i,a_j}\overline{u_{z_i,b_j}}
$
	and define
	\[
		d_{ij}=
		\begin{cases}
			|c_{ij}|,&i\ne j,\\
			0,&i=j.
		\end{cases}
	\]
	Since the families $(a_j)_j$ and $(b_j)_j$ are injective, the Cauchy--Schwarz inequality gives
    \[
        \sum_{j\in I}d_{ij}
        \le
        \left(\sum_j|u_{y_i,a_j}|^2\right)^{1/2}
        \left(\sum_j|u_{z_i,b_j}|^2\right)^{1/2}
        \le1.
    \]
    The families $(y_i)_i$ and $(z_i)_i$ are also injective, so another application of the Cauchy--Schwarz inequality gives
    \[
        \sum_{i\in I}d_{ij}\le1.
    \]
	Apply Lemma~\ref{lem:double-paving}, with $M=1$ and $\eta=\alpha^2/2$, to obtain a finite partition
$
		I=I_1\sqcup\cdots\sqcup I_r
$
	such that
	\begin{equation}\label{eq:cross-small}
		\sum_{\substack{j\in I_k\\j\ne i}}d_{ij}
		<\frac{\alpha^2}{2},
		\qquad i\in I_k.
	\end{equation}

	For each $k$, define a partial isometry $V_k\in B(\ell^2(X))$ by
	\[
		V_k\delta_{b_j}=\delta_{a_j}
		\quad(j\in I_k),
	\]
	and let $V_k$ vanish on the orthogonal complement of the closed linear span of the $\delta_{b_j}$. The two endpoint families are injective, so $V_k$ is well defined and
	\[
		\supp(V_k)=\{(a_j,b_j):j\in I_k\}\subseteq E.
	\]
	Hence $V_k\in\Cu(X,\cE)$. The coefficient series below is absolutely convergent by the same Cauchy--Schwarz estimate used in the row bound. For $i\in I_k$, equations \eqref{eq:selected-coefficients}, \eqref{eq:chosen-preimages}, and \eqref{eq:cross-small} imply
	\begin{align}
	\left|
	\langle UV_kU^*\delta_{z_i},\delta_{y_i}\rangle
	\right|
	&=
	\left|
	\sum_{j\in I_k}u_{y_i,a_j}\overline{u_{z_i,b_j}}
	\right|\notag\\
	&\ge
	|u_{y_i,a_i}|\,|u_{z_i,b_i}|
	-
	\sum_{\substack{j\in I_k\\j\ne i}}d_{ij}\notag\\
	&>\alpha^2-\frac{\alpha^2}{2}
	=\frac{\alpha^2}{2}.
	\label{eq:controlled-threshold}
	\end{align}
	Since $UV_kU^*=\Phi(V_k)\in\Cu(Y,\cF)$, Lemma~\ref{lem:threshold} and \eqref{eq:controlled-threshold} show that
	\[
		\{(y_i,z_i):i\in I_k\}\in\cF.
	\]
	There are only finitely many paving classes and finitely many matchings, so finite union closure gives $R\in\cF$. In view of \eqref{eq:image-relation}, $f$ is controlled.
\end{proof}

Applying Proposition~\ref{prop:f-controlled} to $\Phi^{-1}=\Ad(U^*)$ yields the symmetric conclusion.

\begin{cor}\label{cor:g-controlled}
	The map $g:Y\to X$ selected in \eqref{eq:f-g-selection} is controlled.
\end{cor}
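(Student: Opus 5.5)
The plan is to obtain Corollary~\ref{cor:g-controlled} from Proposition~\ref{prop:f-controlled} by a symmetry argument. We exchange the roles of the two spaces, replacing $\Phi$ by $\Phi^{-1}$ and $U$ by $U^*$. The main thing to check is that $g$ arises from $U^*$ exactly the way $f$ arose from $U$, so that the earlier proof applies verbatim.

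First, $\Phi^{-1}:\Cu(Y,\cF)\to\Cu(X,\cE)$ is a $C^*$-algebra isomorphism, and $\Phi^{-1}(b)=U^*bU$ for $b\in\Cu(Y,\cF)$, by \eqref{eq:spatial-implementation}. The unitary $U^*:\ell^2(Y)\to\ell^2(X)$ is therefore an implementing unitary of the kind produced by Proposition~\ref{prop:spatial}: it also carries coarse-component summands onto coarse-component summands, although this property is never used in the proof of Proposition~\ref{prop:f-controlled}. Its coefficients are
\[
u^*_{xy}:=\langle U^*\delta_y,\delta_x\rangle=\overline{u_{yx}},
\]
so $|u^*_{xy}|=|u_{yx}|$. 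By \eqref{eq:selected-coefficients}, $|u^*_{g(y),y}|\ge\beta$ for all $y\in Y$. Hence $g$ is a selection map for $U^*$ with threshold $\beta$, playing exactly the role that $f$ plays for $U$ with threshold $\alpha$. The fibre bound $|g^{-1}(x)|\le\beta^{-2}$ in \eqref{eq:fibre-bounds} is the corresponding analogue of the bound used for $f$.

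Next, I rerun the proof of Proposition~\ref{prop:f-controlled} with $(X,\cE)$ and $(Y,\cF)$ interchanged, $U$ replaced by $U^*$, $f$ by $g$, and $\alpha$ by $\beta$. The steps are as follows.
\begin{enumerate}
\item Fix $F\in\cF$ and write $(g\times g)(F)$ as a union of finitely many matchings, using Lemma~\ref{lem:matching-decomposition}, uniform local finiteness of $Y$, and the fibre bound for $g$.
\item For each matching, lift its pairs to pairs in $F$ and form the partial translation supported there.
\item Bound the off-diagonal coefficient matrix by $1$ in both its row and column $\ell^1$ norms, using Cauchy--Schwarz and orthonormality of the rows and columns of $U^*$.
\item Pave this matrix by Lemma~\ref{lem:double-paving} with $\eta=\beta^2/2$.
\item Conjugate each paved partial translation by $U^*$. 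The result lies in $\Cu(X,\cE)$ and has the selected coefficients of modulus greater than $\beta^2/2$.
\item Conclude by Lemma~\ref{lem:threshold} that each paved class, and hence $(g\times g)(F)$, belongs to $\cE$.
\end{enumerate}

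The only point needing care is the hypotheses. The proof of Proposition~\ref{prop:f-controlled} used only three things: that $\Ad(U)$ maps $\Cu(X,\cE)$ into $\Cu(Y,\cF)$, that $U$ is unitary, and that the selected coefficients are bounded below. All three hold for $U^*$, $\Phi^{-1}$ and $g$. No step here is a genuine obstacle; the statement is a formal consequence of Proposition~\ref{prop:f-controlled}.
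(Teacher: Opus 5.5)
Your proposal is correct and matches the paper's argument: the paper also obtains the corollary by applying Proposition~\ref{prop:f-controlled} to $\Phi^{-1}=\Ad(U^*)$. Your check that $|\langle U^*\delta_y,\delta_x\rangle|=|u_{yx}|$, so that $g$ is a $\beta$-selection map for $U^*$ with fibre bound $\beta^{-2}$, is exactly the verification that the symmetry argument needs.
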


It remains to establish closeness of the two composites to the identity maps. The argument again uses the double paving, now applied to diagonal projections rather than partial translations.

\begin{prop}\label{prop:close-X}
	The map $g\circ f$ is close to $\id_X$.
\end{prop}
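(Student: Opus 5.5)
The plan is to run the same paving argument as in Proposition~\ref{prop:f-controlled}, with the partial translations $V_k$ replaced by diagonal projections $\chi_B\in\ell^\infty(Y)\subseteq\Cu(Y,\cF)$. These are pulled back by $\Phi^{-1}=\Ad(U^*)$. The goal is to show that
\[
R=\{(g(f(x)),x):x\in X\}
\]
belongs to $\cE$. The relevant observation is that, for $B\subseteq Y$,
\[
\langle U^*\chi_BU\delta_x,\delta_{x'}\rangle=\sum_{y\in B}\overline{u_{y,x'}}\,u_{y,x}.
\]
For $x'=g(f(x))$, the single term with $y=f(x)\in B$ satisfies
\[
|\overline{u_{f(x),g(f(x))}}\,u_{f(x),x}|\ge\beta\alpha
\]
by \eqref{eq:selected-coefficients}. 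So the main term has a fixed size, and the only task is to control the cross terms.

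First I reduce to matchings. Each $x$ has exactly one partner $g(f(x))$ in $R$. Conversely, a given $x'$ arises from at most $K_fK_g$ points $x$, by \eqref{eq:fibre-bounds}. Lemma~\ref{lem:matching-decomposition} therefore writes $R$ as a union of at most $\max\{1,K_fK_g\}$ matchings, so it suffices to treat one matching
\[
R_0=\{(x_i',x_i):i\in I\},\qquad x_i'=g(y_i),\quad y_i=f(x_i).
\]
Within $R_0$ the points $x_i$ are distinct and the points $x_i'$ are distinct. Since $x_i'=g(y_i)$, the points $y_i$ are also distinct.

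Next I set up the weights. For $i\ne j$ put $d_{ij}=|u_{y_j,x_i'}|\,|u_{y_j,x_i}|$, and put $d_{ii}=0$. For fixed $i$, the points $y_j$ are distinct, so Cauchy--Schwarz over $j$ and orthonormality of the columns $U\delta_{x_i'}$ and $U\delta_{x_i}$ give $\sum_jd_{ij}\le1$. For fixed $j$, the families $(x_i')_i$ and $(x_i)_i$ are injective, so Cauchy--Schwarz and orthonormality of the row $U^*\delta_{y_j}$ give $\sum_id_{ij}\le1$. Lemma~\ref{lem:double-paving} with $M=1$ and $\eta=\alpha\beta/2$ then yields a partition $I=I_1\sqcup\cdots\sqcup I_r$ whose in-class cross sums are $<\alpha\beta/2$.

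Finally I apply the threshold lemma. Set $B_k=\{y_j:j\in I_k\}$. Then $U^*\chi_{B_k}U=\Phi^{-1}(\chi_{B_k})\in\Cu(X,\cE)$. For $i\in I_k$, the $(x_i',x_i)$ entry of this operator is bounded below by $\alpha\beta-\alpha\beta/2=\alpha\beta/2$; the series converges absolutely by the same Cauchy--Schwarz estimate. Lemma~\ref{lem:threshold} shows that $\{(x_i',x_i):i\in I_k\}\in\cE$. Taking finite unions over the classes and the matchings gives $R\in\cE$, hence $g\circ f\sim\id_X$.

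The only delicate point, which I expect to be the main obstacle, is the bookkeeping that makes both the row and the column $\ell^1$ bounds hold. This requires the $y_i$ to be distinct (for the row bound) and both families $x_i$, $x_i'$ to be injective (for the column bound), which is exactly what the matching reduction provides. The rest is a direct transcription of the argument for Proposition~\ref{prop:f-controlled}.
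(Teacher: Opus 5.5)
Your proposal is correct and follows the paper's proof essentially step for step. It uses the same matching reduction via fibre bounds, the same weights $d_{ij}=|u_{y_j,x_i'}|\,|u_{y_j,x_i}|$, the same double paving with $\eta=\alpha\beta/2$, and the same diagonal projections onto $\{y_j:j\in I_k\}$ pulled back by $\Phi^{-1}$ and fed into Lemma~\ref{lem:threshold}. The only cosmetic difference is that you work with $\{(g(f(x)),x)\}$ and read off the $(x_i',x_i)$ entry, while the paper treats $Q_X=\{(x,g(f(x)))\}$ and uses inverse closure at the end.
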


\begin{proof}
	Consider the relation
    \[
        Q_X=\{(x,g(f(x))):x\in X\}.
    \]
    Its horizontal sections have cardinality one. For $b\in X$,
    \[
        |\{x:g(f(x))=b\}|
        \le \sum_{y\in g^{-1}(b)}|f^{-1}(y)|
        \le \alpha^{-2}\beta^{-2}.
    \]
    Thus its vertical sections are uniformly bounded. Hence Lemma~\ref{lem:matching-decomposition} writes $Q_X$ as a finite union of matchings.

	Fix one matching
	\begin{equation}\label{eq:close-matching-X}
		\{(a_i,b_i):i\in I\}\subseteq Q_X
	\end{equation}
	and put
	\begin{equation}\label{eq:yi-close-X}
		y_i=f(a_i).
	\end{equation}
	Then $b_i=g(y_i)$. The family $(y_i)_i$ is injective: if $y_i=y_j$, then $b_i=g(y_i)=g(y_j)=b_j$, contradicting the matching property in \eqref{eq:close-matching-X}.

	For $i\ne j$, define
	\[
		d_{ij}
		=
		|u_{y_j,b_i}|\,|u_{y_j,a_i}|,
		\qquad d_{ii}=0.
	\]
	Since $(y_j)_j$, $(a_i)_i$, and $(b_i)_i$ are injective, the Cauchy--Schwarz inequality gives
    \[
        \sum_jd_{ij}
        \le
        \left(\sum_j|u_{y_j,b_i}|^2\right)^{1/2}
        \left(\sum_j|u_{y_j,a_i}|^2\right)^{1/2}
        \le1
    \]
    for every $i$, and
    \[
        \sum_id_{ij}
        \le
        \left(\sum_i|u_{y_j,b_i}|^2\right)^{1/2}
        \left(\sum_i|u_{y_j,a_i}|^2\right)^{1/2}
        \le1
    \]
    for every $j$.
	Use Lemma~\ref{lem:double-paving} with $\eta=\alpha\beta/2$ to obtain a finite partition $I=I_1\sqcup\cdots\sqcup I_r$. For each $k$, set
    \[
        Y_k=\{y_j:j\in I_k\},
        \qquad
        P_k=\chi_{Y_k}\in\ell^\infty(Y).
    \]
    The coefficient sum below is absolutely convergent by the Cauchy--Schwarz inequality. For $i\in I_k$, equations \eqref{eq:selected-coefficients} and \eqref{eq:yi-close-X} give
	\begin{align}
	\left|
	\langle U^*P_kU\delta_{b_i},\delta_{a_i}\rangle
	\right|
	&=
	\left|
	\sum_{j\in I_k}u_{y_j,b_i}\overline{u_{y_j,a_i}}
	\right|\notag\\
	&\ge
	|u_{y_i,b_i}|\,|u_{y_i,a_i}|
	-
	\sum_{\substack{j\in I_k\\j\ne i}}d_{ij}\notag\\
	&>\alpha\beta-\frac{\alpha\beta}{2}
	=\frac{\alpha\beta}{2}.
	\label{eq:close-threshold-X}
	\end{align}
	By \eqref{eq:diagonal-inclusion}, $P_k\in\Cu(Y,\cF)$, and hence
	\[
		U^*P_kU=\Phi^{-1}(P_k)\in\Cu(X,\cE).
	\]
	Lemma~\ref{lem:threshold} and \eqref{eq:close-threshold-X} show that every paving class in \eqref{eq:close-matching-X} is an entourage. Finite union closure gives $Q_X\in\cE$. Since $\cE$ is closed under inverses,
	\[
		\{(g(f(x)),x):x\in X\}=Q_X^{-1}\in\cE.
	\]
	Thus $g\circ f$ is close to $\id_X$.
\end{proof}

The same proof, with the roles of $X$ and $Y$ reversed, completes the symmetric half. For clarity, we spell out the coefficient that plays the role of \eqref{eq:close-threshold-X}.

\begin{prop}\label{prop:close-Y}
	The map $f\circ g$ is close to $\id_Y$.
\end{prop}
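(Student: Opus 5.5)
We mirror the proof of Proposition~\ref{prop:close-X}, exchanging the roles of $X$ and $Y$, $U$ and $U^*$, $f$ and $g$, and $\alpha$ and $\beta$. Consider the relation
\[
    Q_Y=\{(y,f(g(y))):y\in Y\}.
\]
Its horizontal sections are singletons. For $z\in Y$, \eqref{eq:fibre-bounds} gives
\[
    |\{y:f(g(y))=z\}|\le\sum_{x\in f^{-1}(z)}|g^{-1}(x)|\le\alpha^{-2}\beta^{-2}.
\]
Lemma~\ref{lem:matching-decomposition} therefore writes $Q_Y$ as a finite union of matchings. It suffices to show that each matching is an entourage of $\cF$.

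Fix one matching $\{(a_i,b_i):i\in I\}\subseteq Q_Y$, and put $x_i=g(a_i)$, so that $b_i=f(x_i)$. The family $(x_i)_i$ is injective, for the same reason as before: $x_i=x_j$ forces $b_i=b_j$. By \eqref{eq:selected-coefficients},
\[
    |u_{a_i,x_i}|\ge\beta,
    \qquad
    |u_{b_i,x_i}|\ge\alpha.
\]
For $i\ne j$ define
\[
    d_{ij}=|u_{b_i,x_j}|\,|u_{a_i,x_j}|,
    \qquad
    d_{ii}=0.
\]
Fixing $i$, the row sums $\sum_jd_{ij}$ are at most $1$ by Cauchy--Schwarz, since $(x_j)_j$ is injective and the rows of $U$ are unit vectors. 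Fixing $j$, the column sums $\sum_id_{ij}$ are at most $1$, since $(a_i)_i$ and $(b_i)_i$ are injective and the columns of $U$ are unit vectors. Lemma~\ref{lem:double-paving} with $M=1$ and $\eta=\alpha\beta/2$ then yields a partition $I=I_1\sqcup\cdots\sqcup I_r$ satisfying \eqref{eq:paving-conclusion}.

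For each $k$ set $P_k=\chi_{\{x_j:j\in I_k\}}\in\ell^\infty(X)$. Then $UP_kU^*=\Phi(P_k)\in\Cu(Y,\cF)$. For $i\in I_k$ the relevant coefficient is
\[
    \langle UP_kU^*\delta_{a_i},\delta_{b_i}\rangle
    =\sum_{j\in I_k}u_{b_i,x_j}\overline{u_{a_i,x_j}},
\]
and the series converges absolutely by Cauchy--Schwarz. Its diagonal term $j=i$ has modulus at least $\alpha\beta$, while the off-diagonal terms sum to less than $\alpha\beta/2$ in modulus. Hence the coefficient exceeds $\alpha\beta/2$ in absolute value.

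Lemma~\ref{lem:threshold} then shows that $\{(b_i,a_i):i\in I_k\}\in\cF$. Taking inverses and finite unions over the paving classes and the matchings gives $Q_Y\in\cF$. Thus $f\circ g\sim\id_Y$.

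The only delicate point is bookkeeping: choosing the indices so that both the row-sum and the column-sum bounds for $d_{ij}$ use injective families. Pairing $j$ with $x_j$, and $i$ with $a_i$ and $b_i$, does this.
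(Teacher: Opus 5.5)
Your proposal is correct and follows essentially the same argument as the paper: the same matching decomposition of $Q_Y$, the same weights $|u_{b_i,x_j}|\,|u_{a_i,x_j}|$ paved with $\eta=\alpha\beta/2$, and the same threshold argument applied to $\Phi(\chi_{\{x_j:j\in I_k\}})$. The only difference is cosmetic: you read off the transposed matrix coefficient, which has the same modulus because the operator is self-adjoint, and so you obtain $\{(b_i,a_i)\}$ before taking inverses.
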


\begin{proof}
	The relation
	\[
		Q_Y=\{(y,f(g(y))):y\in Y\}
	\]
	has horizontal sections of cardinality one. Its vertical sections have cardinality at most $\alpha^{-2}\beta^{-2}$, by the same fiber-counting estimate used for $Q_X$. It is therefore a finite union of matchings. Fix one matching $\{(c_i,d_i):i\in I\}$ and put $x_i=g(c_i)$, so that $d_i=f(x_i)$. The family $(x_i)_i$ is injective, because equality $x_i=x_j$ would imply $d_i=f(x_i)=f(x_j)=d_j$.

	Define
    \[
        d_{ij}'=
        \begin{cases}
            |u_{c_i,x_j}|\,|u_{d_i,x_j}|,&i\ne j,\\
            0,&i=j.
        \end{cases}
    \]
    For every $i$, injectivity of $(x_j)_j$ and the Cauchy--Schwarz inequality give
    \[
        \sum_jd_{ij}'
        \le
        \left(\sum_j|u_{c_i,x_j}|^2\right)^{1/2}
        \left(\sum_j|u_{d_i,x_j}|^2\right)^{1/2}
        \le1.
    \]
    For every $j$, injectivity of $(c_i)_i$ and $(d_i)_i$ gives $\sum_i d_{ij}'\le1$. Apply Lemma~\ref{lem:double-paving} with $\eta=\alpha\beta/2$. For a resulting paving class $I_k$, set
	\[
        X_k=\{x_j:j\in I_k\},
        \qquad
		R_k=\chi_{X_k}\in\ell^\infty(X).
	\]
    The coefficient sum below is absolutely convergent by the Cauchy--Schwarz inequality. Then, for $i\in I_k$,
	\[
	\begin{aligned}
		|\langle UR_kU^*\delta_{d_i},\delta_{c_i}\rangle|
		&\ge
		|u_{c_i,x_i}|\,|u_{d_i,x_i}|
		-
		\sum_{\substack{j\in I_k\\j\ne i}}d_{ij}'\\
		&>\alpha\beta-\frac{\alpha\beta}{2}
		=\frac{\alpha\beta}{2}.
	\end{aligned}
	\]
	Since $UR_kU^*=\Phi(R_k)\in\Cu(Y,\cF)$, Lemma~\ref{lem:threshold} shows that every paving class is an entourage. Therefore $Q_Y\in\cF$, and inverse closure gives
	\[
		\{(f(g(y)),y):y\in Y\}\in\cF.
	\]
	Hence $f\circ g$ is close to $\id_Y$.
\end{proof}

The preceding section gives a coarse equivalence. We now show that the particular coarse equivalence obtained from the implementing unitary can be replaced, within its closeness class, by a bijection.

\section{Upgrading to a bijective coarse equivalence}\label{sec:bijective}

We first record the operator-norm paving theorem in the form needed below. The finite-dimensional self-adjoint case is the paving theorem obtained
from the solution of the celebrated Kadison--Singer problem by Marcus, Spielman,
and Srivastava; see \cite[Theorem~6.1]{MSS15}. The extensions to
arbitrary index sets and to non-self-adjoint operators follow from a
compactness argument and a common refinement applied to the real and
imaginary parts, respectively.

\begin{lem}\label{lem:operator-paving}
	For every $\varepsilon>0$, there is an integer $r\ge1$ with the following property. If $I$ is an arbitrary set and $T\in B(\ell^2(I))$ has zero diagonal, then there is a partition
$
		I=I_1\sqcup\cdots\sqcup I_r
$
	such that
	\[
		\|p_{I_j}Tp_{I_j}\|
		\le \varepsilon\|T\|,
		\qquad 1\le j\le r,
	\]
	where $p_A=\chi_A$ denotes the diagonal projection associated with $A\subseteq I$.
\end{lem}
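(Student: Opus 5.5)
We derive Lemma~\ref{lem:operator-paving} from the finite-dimensional self-adjoint paving theorem \cite[Theorem~6.1]{MSS15} in three steps: pass to non-self-adjoint operators, then to arbitrary index sets, then assemble.

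\textbf{Finite self-adjoint case.} We take as given: for every $\varepsilon'>0$ there is $r_0=r_0(\varepsilon')$ such that every self-adjoint zero-diagonal matrix $A$ on $\ell^2(F)$, with $F$ finite, admits a partition of $F$ into $r_0$ pieces (some possibly empty) satisfying $\|p_{F_j}Ap_{F_j}\|\le\varepsilon'\|A\|$.

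\textbf{Finite non-self-adjoint case.} Let $T$ have zero diagonal and write $T=A+iB$ with
\[
A=\tfrac12(T+T^*),\qquad B=\tfrac1{2i}(T-T^*).
\]
Both are self-adjoint, have zero diagonal, and satisfy $\|A\|,\|B\|\le\|T\|$. Pave $A$ and $B$ separately with $\varepsilon'=\varepsilon/2$, and take the common refinement, which has $r=r_0^2$ pieces. Compressing to a smaller diagonal block does not increase the norm: if $G\subseteq F_j$, then
\[
\|p_GAp_G\|=\|p_G(p_{F_j}Ap_{F_j})p_G\|\le\|p_{F_j}Ap_{F_j}\|.
\]
Hence each refined piece $G$ satisfies
\[
\|p_GTp_G\|\le\|p_GAp_G\|+\|p_GBp_G\|\le\varepsilon\|T\|.
\]
Here $r$ depends only on $\varepsilon$.

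\textbf{Arbitrary $I$.} Consider the compact space $\Omega=\{1,\dots,r\}^I$. For a coloring $c$ and finite $F\subseteq I$, impose the condition
\[
\bigl\|p_{F\cap c^{-1}(j)}\,T\,p_{F\cap c^{-1}(j)}\bigr\|\le\varepsilon\|T\|\qquad\text{for all } j.
\]
This condition depends only on the finitely many coordinates in $F$, so it defines a clopen, hence closed, subset of $\Omega$.

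For the finite intersection property, take finitely many such conditions with finite sets $F_1,\dots,F_m$, and let $F=\bigcup_l F_l$. Apply the finite case to $p_FTp_F$, which has zero diagonal and $\|p_FTp_F\|\le\|T\|$. This yields a coloring of $F$; extend it arbitrarily to all of $I$. By the monotonicity of compressions above, the coloring satisfies every condition with $F_l\subseteq F$.

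By compactness there is a coloring $c$ satisfying all the conditions. Set $I_j=c^{-1}(j)$. For any finite $F$,
\[
\|p_{F\cap I_j}\,T\,p_{F\cap I_j}\|\le\varepsilon\|T\|.
\]
As $F$ increases, $p_{F\cap I_j}\to p_{I_j}$ strongly. Since
\[
\|p_{I_j}Tp_{I_j}\|=\sup_F\|p_{F\cap I_j}\,T\,p_{F\cap I_j}\|,
\]
which follows by testing on finitely supported vectors, we obtain $\|p_{I_j}Tp_{I_j}\|\le\varepsilon\|T\|$ for each $j$, as required.

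The only genuinely deep input is the MSS paving theorem in its uniform form, with $r$ independent of the dimension. Every other step is formal. The one point needing care is that paving for projection-type (Anderson) matrices is converted to all zero-diagonal self-adjoint matrices with uniform $r$; we take that as part of the statement of \cite[Theorem~6.1]{MSS15}.
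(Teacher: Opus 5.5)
Your proof is correct and uses the same ingredients as the paper's: the dimension-free Marcus--Spielman--Srivastava paving for finite zero-diagonal self-adjoint matrices, the real/imaginary decomposition with a common refinement giving $r=r_0(\varepsilon/2)^2$, and a finite-intersection-property argument in $\{1,\ldots,r\}^I$ followed by testing on finitely supported vectors. The only difference is the order of steps: the paper runs the compactness argument for self-adjoint operators first, using the quadratic-form formula for the norm, and only then refines. You refine in the finite case and run compactness directly for non-self-adjoint $T$, where testing the sesquilinear form on finitely supported vectors gives the final norm bound equally well.
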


\begin{proof}
	We first prove the assertion for self-adjoint operators, allowing an
	arbitrary tolerance $\delta>0$. By the finite-dimensional paving
	theorem obtained from the solution of the Kadison--Singer problem
	\cite[Theorem~6.1]{MSS15}, there is an integer
	$r_0=r_0(\delta)$, independent of the dimension, such that every
	finite-dimensional zero-diagonal self-adjoint operator $S$ admits a
	partition into $r_0$ diagonal blocks, each having norm at most
	$\delta\|S\|$.
	
	Let $T=T^*$, and let $I$ be arbitrary. Consider the compact product
	space
$
	\Omega=\{1,\ldots,r_0\}^{I}.
$
	An element $c\in\Omega$ will be regarded as a coloring of $I$. For
	each finite set $K\subseteq I$ and each
	$j\in\{1,\ldots,r_0\}$, let
	\[
	\mathcal C_{K,j}
	=
	\left\{
	c\in\Omega:
	\left\|
	p_{K\cap c^{-1}(j)}T
	p_{K\cap c^{-1}(j)}
	\right\|
	\le\delta\|T\|
	\right\}.
	\]
	Each $\mathcal C_{K,j}$ is closed, since its defining condition
	depends on only finitely many coordinates of $c$.
	
	We claim that the family
	\[
	\{\mathcal C_{K,j}:
	K\subseteq I\text{ finite},\ 1\le j\le r_0\}
	\]
	has the finite intersection property. Indeed, consider finitely many
	of these conditions, involving finite sets
	$K_1,\ldots,K_s$, and put
$
	K=K_1\cup\cdots\cup K_s.
$
	The compression $p_KTp_K$ is a finite-dimensional zero-diagonal
	self-adjoint operator. By the finite-dimensional paving theorem,
	there is a coloring
	\[
	c_K:K\longrightarrow\{1,\ldots,r_0\}
	\]
	such that
	\[
	\left\|
	p_{K\cap c_K^{-1}(j)}T
	p_{K\cap c_K^{-1}(j)}
	\right\|
	\le
	\delta\|p_KTp_K\|
	\le
	\delta\|T\|
	\]
	for every $j$. For each $\nu$, the corresponding block over
	$K_\nu$ is a compression of the block over $K$, and therefore
	satisfies the same norm bound. Extending $c_K$ arbitrarily to $I$
	gives a coloring satisfying all the prescribed conditions.
	
	Compactness of $\Omega$ now yields a coloring
	$c\in\Omega$ belonging to every $\mathcal C_{K,j}$. Set
	\[
	I_j=c^{-1}(j),
	\qquad 1\le j\le r_0,
	\]
	and let
	\[
	S_j=p_{I_j}Tp_{I_j}.
	\]
	If $\xi\in\ell^2(I)$ is a finite-support unit vector and
	$K=\supp(\xi)$, then, with $\eta=p_{I_j}\xi$, we have
	\[
	\begin{aligned}
		|\langle S_j\xi,\xi\rangle|
		&=
		|\langle T\eta,\eta\rangle|\\
		&\le
		\left\|
		p_{K\cap I_j}T p_{K\cap I_j}
		\right\|\|\eta\|^2\\
		&\le
		\delta\|T\|.
	\end{aligned}
	\]
	Finite-support vectors are dense in $\ell^2(I)$, and $S_j$ is
	self-adjoint. The variational formula for the norm therefore gives
	\[
	\|S_j\|\le\delta\|T\|,
	\qquad 1\le j\le r_0.
	\]
	This proves the assertion for self-adjoint operators on arbitrary
	index sets.
	
	We now return to a general zero-diagonal operator $T$. Apply the
	self-adjoint case with tolerance $\delta=\varepsilon/2$ separately
	to
	\[
	\Re T=\frac{T+T^*}{2}
	\qquad\text{and}\qquad
	\Im T=\frac{T-T^*}{2i}.
	\]
	Both operators are self-adjoint and have zero diagonal. Let
	\[
	I=A_1\sqcup\cdots\sqcup A_{r_0},
	\qquad
	I=B_1\sqcup\cdots\sqcup B_{r_0}
	\]
	be the resulting partitions. Their common refinement consists of
	the sets
	\[
	A_k\cap B_\ell,
	\qquad 1\le k,\ell\le r_0,
	\]
	and hence has at most $r_0^2$ classes. For every class $C$ in this
	refinement,
	\[
	\begin{aligned}
		\|p_CTp_C\|
		&\le
		\|p_C(\Re T)p_C\|
		+
		\|p_C(\Im T)p_C\|\\
		&\le
		\frac{\varepsilon}{2}\|\Re T\|
		+
		\frac{\varepsilon}{2}\|\Im T\|\\
		&\le
		\varepsilon\|T\|.
	\end{aligned}
	\]
	Thus the conclusion holds with
	$r=r_0(\varepsilon/2)^2$.
\end{proof}

The second auxiliary lemma converts an invertible, locally finite matrix into a perfect matching.

\begin{lem}\label{lem:invertible-support-matching}
	Let $I$ and $J$ be arbitrary sets, and let
$
		A:\ell^2(I)\longrightarrow\ell^2(J)
$
	be invertible. Suppose that $R=\supp(A)\subseteq J\times I$ has uniformly finite horizontal and vertical sections. Then there is a bijection $\sigma:I\to J$ such that
	\[
		(\sigma(i),i)\in R,
		\qquad i\in I.
	\]
\end{lem}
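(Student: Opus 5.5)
The plan is to use Hall's marriage theorem in its infinite, locally finite form, applied to the bipartite graph with vertex classes $I$ and $J$ and edge set $R$. Since $R$ has finite horizontal and vertical sections, every vertex has finite degree. Hall's theorem for locally finite graphs (a compactness argument as in Lemma~\ref{lem:matching-decomposition}) then gives a matching saturating $I$ as soon as Hall's condition holds for $I$. Symmetrically, we obtain a matching saturating $J$ from Hall's condition for $J$. The Cantor--Schröder--Bernstein argument for matchings (the König/Mendelsohn--Dulmage theorem) then combines an $I$-saturating matching and a $J$-saturating matching into a single perfect matching. Alternatively, we can invoke the two-sided infinite Hall theorem directly. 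Such a perfect matching is a bijection $\sigma:I\to J$ with $(\sigma(i),i)\in R$.

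The key step is to verify Hall's condition on both sides using invertibility. For a finite $F\subseteq I$, let $N(F)=\{j:(j,i)\in R\text{ for some }i\in F\}$. The vectors $A\delta_i$ with $i\in F$ are supported in $N(F)$, which is finite. Because $A$ is injective, these $|F|$ vectors are linearly independent in $\ell^2(N(F))$. Hence $|N(F)|\ge|F|$.

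For the $J$ side, we use $A^*$. Its support is the transpose of $R$, and $A^*$ is injective because $A$ is invertible. For a finite $G\subseteq J$, the vectors $A^*\delta_j$ with $j\in G$ are supported in $N(G)=\{i:(j,i)\in R\text{ for some }j\in G\}$ and are linearly independent, so $|N(G)|\ge|G|$.

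Local finiteness matters in a second way: it makes $N(F)$ and $N(G)$ finite. This is exactly what the compactness proof of Hall's theorem requires. One considers the product of the finite sets $R[i]$ and imposes the closed injectivity constraints on finite subsets. Uniform bounds are not even needed, only finiteness.

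The main obstacle is passing from two one-sided injections to a bijection in the infinite setting. I would handle it with the standard argument. Take matchings $M_1$, saturating $I$, and $M_2$, saturating $J$, both contained in $R$. Every connected component of $M_1\cup M_2$ is a finite even cycle, a two-way infinite path, or a one-way infinite path. On each component we choose alternately the $M_1$ or the $M_2$ edges so that every vertex is covered. This is a case check of the kind done in the proof of Schröder--Bernstein, and it yields the bijection $\sigma$.
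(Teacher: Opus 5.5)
Your proposal is correct and follows the paper's proof essentially step for step. You get Hall's condition on both sides from linear independence of $\{A\delta_i\}$ and $\{A^*\delta_j\}$ in finite-dimensional coordinate spaces, use compactness over $\prod_i R[i]$ to produce the two one-sided injections, and finish with a Cantor--Bernstein merge. The only cosmetic difference is in that merge: you argue via the components of $M_1\cup M_2$, where an edge lying in both matchings counts as a $2$-cycle and finite paths are excluded by saturation, whereas the paper writes out the equivalent explicit iteration $I_0=I\setminus\psi(J)$, $I_{n+1}=\psi(\varphi(I_n))$.
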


\begin{proof}
	For a finite set $K\subseteq I$, the vectors $\{A\delta_i:i\in K\}$ are linearly independent and belong to
	\[
		\ell^2(R[K]),
		\qquad
		R[K]=\{j\in J:(j,i)\in R\text{ for some }i\in K\}.
	\]
	Consequently,
	\[
		|R[K]|\ge |K|.
	\]
	Applying the same argument to $A^*$ gives
	\[
		|R^{-1}[L]|\ge |L|
	\]
	for every finite $L\subseteq J$. To construct the first injection, consider the compact product
	\[
		\prod_{i\in I}\{j\in J:(j,i)\in R\},
	\]
whose factors are nonempty by injectivity of $A$ and finite by the local
finiteness of $R$. Regarding a point of this product as a map $\varphi$ on $I$, impose, for each pair of distinct points $i,i'\in I$, the closed condition $\varphi(i)\ne\varphi(i')$. Every finite family of these conditions is satisfiable by the finite Hall marriage theorem, so the finite intersection property gives an injection
	\[
		\varphi:I\longrightarrow J
	\]
	such that $(\varphi(i),i)\in R$ for all $i\in I$. Applying the same argument to $R^{-1}$ gives an injection
	\[
		\psi:J\longrightarrow I
	\]
	such that $(j,\psi(j))\in R$ for all $j\in J$.

	For completeness, the matching version of the Cantor--Bernstein construction preserves this support condition. Set
	\[
		I_0=I\setminus\psi(J),
		\qquad
		I_{n+1}=\psi(\varphi(I_n)),
		\qquad
		I_\infty=\bigcup_{n\ge0}I_n,
	\]
	and define
	\[
		\sigma(i)=
		\begin{cases}
			\varphi(i),&i\in I_\infty,\\
			\psi^{-1}(i),&i\notin I_\infty.
		\end{cases}
	\]
	Then $\sigma$ is a bijection, and $(\sigma(i),i)\in R$ in both cases.
\end{proof}

We now apply these two lemmas to the controlled coefficient maps constructed above.

\begin{prop}\label{prop:bijective-upgrade}
	There is a bijection $b:X\to Y$ such that $b\sim f$ and $b^{-1}\sim g$. In particular, $b$ and $b^{-1}$ are controlled.
\end{prop}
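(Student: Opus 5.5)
The plan is to produce an invertible operator $\ell^2(X)\to\ell^2(Y)$ whose support lies within an entourage of the graph of $f$. Lemma~\ref{lem:invertible-support-matching} then turns this operator into the required bijection.

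\emph{Step 1: choosing the pieces.} First partition $X=X'_1\sqcup\cdots\sqcup X'_p$ into finitely many sets on which $f$ is injective. This is possible because the fibres of $f$ have size at most $\alpha^{-2}$ by \eqref{eq:fibre-bounds}, so a greedy colouring with $\lfloor\alpha^{-2}\rfloor$ colours separates each fibre, and a compactness argument handles arbitrary index sets. On one piece $X'$, with $f$ injective there, consider the matrix $M_{x',x}=u_{f(x'),x}$ for $x,x'\in X'$. It is the matrix of the compression $p_{f(X')}Up_{X'}$, transported to $\ell^2(X')$ via $f$, so $\|M\|\le1$. Its diagonal $D$ satisfies $|D_{xx}|\ge\alpha$ by \eqref{eq:selected-coefficients}. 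Apply Lemma~\ref{lem:operator-paving} to the zero-diagonal operator $M-D$ with $\varepsilon<\alpha/2$. On each resulting class $X_k$, the compression equals $D+(\text{error of norm}<\alpha/2)$ and is therefore invertible. Altogether we get finitely many pieces $X_1,\dots,X_m$. For each $k$, the map $f|_{X_k}$ is injective and $c_k:=p_{f(X_k)}Up_{X_k}$ is invertible as a map $\ell^2(X_k)\to\ell^2(f(X_k))$.

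\emph{Step 2: an equivalence inside $\Cu(Y,\cF)$.} Put $q_k=Up_{X_k}U^*=\Phi(p_{X_k})\in\Cu(Y,\cF)$. Then
\[
a_k:=p_{f(X_k)}q_k=c_kU^*|_{\text{ran}(q_k)}
\]
lies in $\Cu(Y,\cF)$ and is invertible from $\text{ran}(q_k)$ onto $\ell^2(f(X_k))$. Its polar part $w_k=a_k(a_k^*a_k)^{-1/2}$ is obtained by continuous functional calculus on the spectrum of $a_k^*a_k$ restricted to $q_k$, which is bounded away from zero. Hence $w_k\in\Cu(Y,\cF)$, with $w_k^*w_k=q_k$ and $w_kw_k^*=p_{f(X_k)}$. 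Stacking the $w_k$ into a column gives a partial isometry $W\in M_{m,1}(\Cu(Y,\cF))$ with $W^*W=\sum_kq_k=1$ and $WW^*=\bigoplus_kp_{f(X_k)}$. Identifying $\bigoplus_k\ell^2(f(X_k))$ with $\ell^2(X)$ via $f$ on each piece, $W$ becomes a unitary $\ell^2(Y)\to\ell^2(X)$.

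\emph{Step 3: approximation and matching.} Approximate each $w_k$ within $1/(2m)$ by an operator $s_k$ supported on a single entourage $F\in\cF$. The resulting column $S$ satisfies $\|S-W\|<1/2$, hence is invertible onto the same range. Read as an operator $\ell^2(Y)\to\ell^2(X)$, its support is contained in
\[
R=\{(x,y):(f(x),y)\in F\}.
\]
This relation has uniformly finite sections, since $F$ is uniformly locally finite and $f$ is uniformly finite-to-one. Lemma~\ref{lem:invertible-support-matching} gives a bijection $\sigma:Y\to X$ with $(f(\sigma(y)),y)\in F$. Set $b=\sigma^{-1}$. Then $\{(f(x),b(x))\}\subseteq F$, so $b\sim f$. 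Since $f$ is a coarse equivalence with coarse inverse $g$, we get $b^{-1}\sim g\circ f\circ b^{-1}\sim g\circ b\circ b^{-1}=g$, using that $g$ is controlled. Control of $b$ and $b^{-1}$ then follows from Proposition~\ref{prop:f-controlled} and Corollary~\ref{cor:g-controlled}.

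\emph{Main obstacle.} I expect Step 1 to be the delicate part: getting a partition that makes every compression uniformly invertible. The plan relies on applying Kadison--Singer paving (Lemma~\ref{lem:operator-paving}) to $M-D$ and on the fixed lower bound $\alpha$ on the diagonal. A second point to check carefully is that $a_k$ genuinely lies in $\Cu(Y,\cF)$. This holds because it is a product of the diagonal projection $p_{f(X_k)}$ with $\Phi(p_{X_k})$, so no spatial information about $U$ itself is needed.
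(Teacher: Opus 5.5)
Your proposal is correct and follows the paper's proof essentially step for step: injective pieces of $f$, Lemma~\ref{lem:operator-paving} to make the compressions $p_{f(X_k)}Up_{X_k}$ invertible, polar parts of $p_{f(X_k)}\Phi(p_{X_k})$ stacked into a column isometry over $\Cu(Y,\cF)$, entourage approximation, and Lemma~\ref{lem:invertible-support-matching}, with your $\sigma:Y\to X$ playing the role of the paper's $\sigma:Y\to S$ composed with $\theta$. Two slips are harmless: the approximating column must first be compressed by $\diag(p_{f(X_1)},\ldots,p_{f(X_m)})$ (the paper's $a=QW$) before it can be invertible onto that range, and since $\|M-D\|\le 2$ the paved error is only $\le 2\varepsilon<\alpha$ rather than $<\alpha/2$, which still suffices for the Neumann series.
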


\begin{proof}
	By \eqref{eq:fibre-bounds}, the fibers of $f$ have uniformly bounded cardinality. Set
	\[
		K_f=\sup_{y\in Y}|f^{-1}(y)|<\infty.
	\]
	Color each fiber with at most $K_f$ colors. The color classes give a finite partition of $X$ into sets on each of which $f$ is injective. Fix one such set $A$. Let
	\[
		J_A:\ell^2(A)\longrightarrow\ell^2(f(A)),
		\qquad
		J_A\delta_x=\delta_{f(x)},
	\]
	and consider
	\[
		T_A=J_A^*p_{f(A)}Up_A\in B(\ell^2(A)).
	\]
	Write $T_A=D_A+C_A$, where $D_A$ is diagonal and $C_A$ has zero diagonal. By \eqref{eq:selected-coefficients},
	\[
		|(D_A)_{xx}|=|u_{f(x),x}|\ge\alpha,
		\qquad x\in A,
	\]
	so $D_A$ is invertible and $\|D_A^{-1}\|\le\alpha^{-1}$. Since $T_A$ is a compression of the unitary $U$, we have $\|T_A\|\le1$; also $\|D_A\|\le1$. Hence $\|C_A\|\le2$. Applying Lemma~\ref{lem:operator-paving} with $\varepsilon=\alpha/8$ and refining each of the finitely many injectivity classes, we obtain a finite partition
	\begin{equation}\label{eq:bijective-partition}
		X=B_1\sqcup\cdots\sqcup B_m
	\end{equation}
	such that $f|_{B_i}$ is injective and
	\begin{equation}\label{eq:invertible-compression}
		p_{f(B_i)}Up_{B_i}:
		p_{B_i}\ell^2(X)\longrightarrow
		p_{f(B_i)}\ell^2(Y)
	\end{equation}
	is invertible. Indeed, in the coordinates given by $J_A$, the operator in \eqref{eq:invertible-compression} is $D_{B_i}+C_{B_i}$, where
	\[
		\|C_{B_i}\|\le\frac{\alpha}{4},
		\qquad
		\|D_{B_i}^{-1}C_{B_i}\|\le\frac14<1.
	\]
	The factorization
	\[
		D_{B_i}+C_{B_i}
		=
		D_{B_i}\bigl(1+D_{B_i}^{-1}C_{B_i}\bigr)
	\]
	and the Neumann series show that the operator is invertible. Moreover, for every unit vector $\xi$,
	\[
		\|(D_{B_i}+C_{B_i})\xi\|
		\ge
		\|D_{B_i}\xi\|-\|C_{B_i}\xi\|
		\ge
		\alpha-\frac{\alpha}{4}
		\ge
		\frac{\alpha}{2}.
	\]
	Thus the minimum modulus of every operator in \eqref{eq:invertible-compression} is at least $\alpha/2$.

	Put
	\[
		P_i=Up_{B_i}U^*=\Phi(p_{B_i}),
		\qquad
		Q_i=p_{f(B_i)}.
	\]
	Then $P_i,Q_i\in\Cu(Y,\cF)$, and \eqref{eq:invertible-compression} says that
	\[
		Q_iP_i:P_i\ell^2(Y)\longrightarrow Q_i\ell^2(Y)
	\]
	is invertible. Hence
	\[
		P_iQ_iP_i\ge\frac{\alpha^2}{4}P_i.
	\]
	Functional calculus applied to $P_iQ_iP_i$ yields the polar partial isometry of $Q_iP_i$. Since $Q_iP_i:P_i\ell^2(Y)\to Q_i\ell^2(Y)$ is onto, the final projection of this polar part is exactly $Q_i$. Thus there is a partial isometry $v_i\in\Cu(Y,\cF)$ satisfying
	\begin{equation}\label{eq:polar-equivalence}
		v_i^*v_i=P_i,
		\qquad
		v_iv_i^*=Q_i.
	\end{equation}
	Explicitly, one may take
	\[
		v_i=Q_iP_i h(P_iQ_iP_i),
	\]
	where $h\in C([0,1])$ is chosen so that $h(0)=0$ and $h(t)=t^{-1/2}$ for $t\in[\alpha^2/4,1]$.

	Define
	\[
		V=
		\begin{pmatrix}
			v_1\\ \vdots\\ v_m
		\end{pmatrix}
		\in M_{m,1}(\Cu(Y,\cF)),
		\qquad
		Q=\diag(Q_1,\ldots,Q_m).
	\]
	Since the $P_i$ are pairwise orthogonal and sum to the identity, and since $v_iv_j^*=v_iP_iP_jv_j^*=0$ for $i\ne j$, \eqref{eq:polar-equivalence} gives
	\begin{equation}\label{eq:matrix-equivalence}
		V^*V=1,
		\qquad
		VV^*=Q.
	\end{equation}
	Thus $V$ is a unitary from $\ell^2(Y)$ onto $Q\ell^2(Y)^m$.

Since $m<\infty$ and entourage-supported operators are norm dense in
$\Cu(Y,\cF)$, we may approximate the entries of $V$ simultaneously.
For each $i\in\{1,\ldots,m\}$, choose an entourage-supported operator
$w_i\in\Cu(Y,\cF)$ such that, for the column
$
W=(w_1,\ldots,w_m)^{\mathsf T},
$
one has
\[
\|W-V\|<\frac12.
\]
For each $i$, choose $F_i\in\cF$ such that
$
\supp(w_i)\subseteq F_i.
$
Set
$
a=QW.
$
	Since $QV=V$, we have $\|a-V\|<1/2$. Moreover, by \eqref{eq:matrix-equivalence},
	\[
		a=VV^*a=V(V^*a),
		\qquad
		\|V^*a-1\|<\frac12.
	\]
	It follows that
	\begin{equation}\label{eq:transport-isomorphism}
		a:\ell^2(Y)\longrightarrow Q\ell^2(Y)^m
	\end{equation}
	is invertible.

	Identify the range space in \eqref{eq:transport-isomorphism} with $\ell^2(S)$, where
	\[
		S=\bigsqcup_{i=1}^m\bigl(f(B_i)\times\{i\}\bigr).
	\]
	The relation $R=\supp(a)\subseteq S\times Y$ has uniformly finite sections in both directions. Indeed, if $((z,i),y)\in R$, then $(z,y)\in F_i$. Consequently,
	\[
		\sup_{(z,i)\in S}|\{y:((z,i),y)\in R\}|
		\le \max_{1\le i\le m}N_{F_i},
		\qquad
		\sup_{y\in Y}|\{(z,i):((z,i),y)\in R\}|
		\le \sum_{i=1}^m N_{F_i^{-1}}.
	\]
	Lemma~\ref{lem:invertible-support-matching} therefore provides a bijection
	\[
		\sigma:Y\longrightarrow S
	\]
	such that $(\sigma(y),y)\in R$ for every $y\in Y$.

	On the other hand, \eqref{eq:bijective-partition} and the injectivity of each $f|_{B_i}$ show that
	\[
		\theta:X\longrightarrow S,
		\qquad
		\theta(x)=(f(x),i)\quad(x\in B_i),
	\]
	is a bijection. Define
	\[
		b=\sigma^{-1}\circ\theta:X\longrightarrow Y.
	\]
	If $x\in B_i$, then $\sigma(b(x))=(f(x),i)$. Since this is an edge of $R$, we have
	\[
		(f(x),b(x))\in F_i.
	\]
	As $m$ is finite,
	\[
		\{(f(x),b(x)):x\in X\}
		\subseteq F_1\cup\cdots\cup F_m\in\cF.
	\]
	Thus $f\sim b$, and hence $b\sim f$ by symmetry of closeness. To spell out the standard closeness argument, let
	\[
		D=\{(b(x),f(x)):x\in X\}\in\cF.
	\]
	For every $E\in\cE$, the relation $(b\times b)(E)$ is contained in the composition of $D$, $(f\times f)(E)$, and $D^{-1}$, and is therefore an entourage. Hence $b$ is controlled. Since $g$ is controlled and $g\circ f\sim\id_X$, we also have
	\[
		g\circ b\sim g\circ f\sim\id_X.
	\]
	Substituting $x=b^{-1}(y)$ shows that $g\sim b^{-1}$, and hence $b^{-1}\sim g$ by symmetry of closeness. Again, closeness to the controlled map $g$ implies that $b^{-1}$ is controlled.
\end{proof}

We can now finish the proof of the main result.

\begin{proof}[Proof of Theorem~\ref{thm:main}]
	If one of $X$ and $Y$ is empty, then so is the other, and the conclusion is immediate. We therefore assume that both spaces are nonempty. Proposition~\ref{prop:spatial} spatially implements the given isomorphism. Corollary~\ref{cor:two-sided-coefficients} and \eqref{eq:f-g-selection} provide maps $f:X\to Y$ and $g:Y\to X$ satisfying the uniform coefficient and fiber bounds in \eqref{eq:selected-coefficients} and \eqref{eq:fibre-bounds}. Proposition~\ref{prop:f-controlled} and Corollary~\ref{cor:g-controlled} show that both maps are controlled. Finally, Propositions~\ref{prop:close-X} and~\ref{prop:close-Y} show that
	\[
		g\circ f\sim\id_X,
		\qquad
		f\circ g\sim\id_Y.
	\]
	Proposition~\ref{prop:bijective-upgrade} replaces $f$ by a close bijection $b:X\to Y$ such that $b^{-1}\sim g$. Hence both $b$ and $b^{-1}$ are controlled, and $(X,\cE)$ and $(Y,\cF)$ are bijectively coarsely equivalent.
\end{proof}

\begin{rem}
	Noncompact ghost projections obstruct naive attempts to localize each rank-one image $\Phi(e_{xx})$ separately. Lemma~\ref{lem:random-diagonal} avoids this obstruction by using the entire atomic diagonal algebra at once: all random diagonal unitaries are transported into the target uniform Roe algebra. This distinction is why the argument does not require the compact-ghost hypotheses used in \cite[Theorem~1.4]{BFV22}. For the equivalence between property~A and compactness of all ghost operators, see Roe and Willett \cite[Theorem~1.3]{RW14}.
\end{rem}

\end{document}